\newcommand{\Palm}{\scriptscriptstyle(0,T_0)}
\newcommand{\mathd}{\mathrm{d}}
\newcommand{\R}{\mathbb R}
\newcommand{\E}{\mathbb E}
\renewcommand{\P}{\mathbb P}
\renewcommand{\phi}{\varphi}
\newcommand{\Z}{\mathbb Z}
\newcommand{\N}{\mathbb{N}}
\renewcommand{\P}{\mathbb P}
\newcommand{\X}{\mathcal X}
\newcommand{\V}{\mathcal V}
\newcommand{\0}{\mathbf{0}}
\newcommand{\x}{\mathbf{x}}
\newcommand{\y}{\mathbf{y}}
\newcommand{\z}{\mathbf{z}}
\renewcommand{\v}{\mathbf{v}}
\newcommand{\e}{\mathbf{e}}
\newcommand{\abs}[1]{\left| #1 \right|}
\newcommand{\set}[1]{\left\{ #1 \right\}}
\newcommand{\gaus}[1]{\lfloor #1 \rfloor}
\newcommand{\conn}{\leftrightarrow}
\newcommand{\1}{\mathbf{1}}
\newcommand{\st}{\star}
\renewcommand{\sp}{\circledast}
\DeclareMathOperator{\St}{Star}
\title{The contact process on scale-free geometric random graphs} 
\author{Peter Gracar$^\dag$ and Arne Grauer$^\ddag$\footnote{Corresponding author, E-Mail: arnegrauer@posteo.de}\\
{\sl\small $^\dag$School of Mathematics, University of Leeds, United Kingdom}\\
{\sl\small $^\ddag$Mathematisches Institut, Universit\"at zu K\"oln, Germany}
}
\date{}
\theoremstyle{plain} 
\newtheorem{theorem}{Theorem}[section]
\newtheorem{prop}[theorem]{Proposition}
\newtheorem{lemma}[theorem]{Lemma}
\newtheorem{assum}{Assumption}[section]
\theoremstyle{definition} 
\theoremstyle{remark} 
\newtheorem{remark}{Remark}[section]
\begin{document}

\maketitle

\begin{abstract}
We study the contact process on a class of geometric random graphs with scale-free degree distribution, defined on a Poisson point process on $\R^d$. This class includes the \emph{age-dependent random connection model} and the \emph{soft Boolean model}. In the ultrasmall regime of these random graphs we provide exact asymptotics for the non-extinction probability when the rate of infection spread is small and show for a finite version of these graphs that the extinction time is of exponential order in the size of the graph.
\end{abstract}

\section{Introduction}
In recent years the contact process has been studied extensively as a simple model for the spread of infection in a population or on a network. In this model each vertex of a given locally-finite graph has one of two states, $0$ or $1$ for each $t\geq 0$, indicating whether the vertex is healthy or infected at time $t$. An infected vertex \emph{recovers} at rate $1$ and transmits the infection to a neighbouring vertex with rate $\lambda>0$, independently of everything else. Precisely, the contact process on a locally-finite graph $G=(V,E)$ is a continuous-time Markov process $(\xi_t)_{t\geq 0}$ on the space $\{0,1\}^V$. By identifying $\xi_t$ with the subset $\{x\in V:\xi_t(x)=1\}\subset V$ for each $t\geq 0$ the transition rates are given by
\begin{align}
\begin{split}
\xi_t \to \xi_t\backslash \{x\}\quad &\text{for}\ x\in \xi_t\ \text{at rate}\ 1\text{, and}\\
\xi_t \to \xi_t\cup \{x\}\quad &\text{for}\ x\notin \xi_t\ \text{at rate}\ \lambda\big\vert\{y\in \xi_t: x\sim y\}\big\vert,
\end{split}\label{eq:transrate}
\end{align}
where we denote by $x\sim y$ that $x$ and $y$ are connected by an edge.

Note that the contact process has a single absorbing state, corresponding to the configuration where all vertices are healthy. Thus, a natural question on the behaviour of the contact process is whether this state is reached in finite time, i.e. whether the \emph{extinction time} of the contact process on $G$, defined by
\[
\tau_G := \inf\{t>0:\xi_t = \emptyset\},
\]
is finite. 

On the lattice $\Z^d$ there exists a critical value $\lambda_c(\Z^d)$ exhibiting a phase transition in whether the process dies out almost surely or not. If $\lambda\leq \lambda_c(\Z^d)$, the extinction time $\tau_{\Z^d}$ is almost surely finite for any initial configuration where only finitely many sites are infected and we say it dies out, whereas if $\lambda>\lambda_c(\Z^d)$ there is a positive probability that the extinction time is infinite even if the infection only starts in a single vertex, see \cite{Lig1999}.
On finite graphs the extinction time is always almost surely finite and a more natural question in this setting is to ask how long the infection survives until it reaches the absorbing state. Interestingly, on the restriction of $\Z^d$ to finite boxes there is again a phase transition with respect to the critical value $\lambda_c(\mathbb{Z}^d)$. If $\lambda< \lambda_c(\Z^d)$ the extinction time is of logarithmic order in the volume of the box whereas if $\lambda>\lambda_c(\Z^d)$ the contact process survives much longer and the extinction time is of exponential order in the volume of the box. In the latter case the contact process is said to be in a \emph{metastable} situation where it stabilizes for an exponentially long amount of time before it reaches the absorbing state where all vertices are healthy, see \cite{Lig1999} for further details. The behaviour of the extinction time of the contact process has been studied on various different finite graphs including on finite regular trees by Stacey \cite{Sta2001}, Cranston et al. \cite{CraMMV2014} and Mountford et al. \cite{MouMVY2016}, on regular graphs by Lalley and Su \cite{Lalley2017} and Mourrat and Valesin \cite{MouV2016}, on Erdós-Renyi graphs by Bhamidi et al. \cite{BhaNNS2021} and for a general large class of finite graphs by Mountford et al. \cite{MouMVY2016} and Schapira and Valesin \cite{SchV2021}.

The situation changes dramatically if we consider random graphs with a scale-free degree distribution such as the configuration model or preferential attachment networks. On these graphs the critical value of $\lambda$ is zero, therefore for any choice of $\lambda>0$ the extinction time is of exponential order in the size of the graph, see \cite{ChaD2009} and \cite{MouMVY2016} for the configuration model and \cite{BerBCS2005} for preferential attachment networks. For these models further results on the metastability are given by Mountford et al. \cite{MouVY2013} on the configuration model, where they provide estimates on the rate of decay of the density of infected vertices in terms of $\lambda$ at a time when the infection has not yet reached the absorbing states, for $\lambda$ small and graph large enough. This rate of decay solely depends on the power-law exponent $\tau$ of the scale-free degree distribution\footnote{We say a graph has a scale-free degree distribution with power-law exponent $\tau$, if the degree distribution of a typical vertex $\mu$ satisfies $\mu(k) = k^{-\tau+o(1)}$ when $k$ is large.}, precisely it is given by
\begin{align}
\rho_\tau(\lambda) = \begin{cases} \lambda^{1/(3-\tau)}\quad &\text{if}\ \tau \in (2,\frac{5}{2}]\\ \frac{\lambda^{2\tau-3}}{\log(1/\lambda)^{\tau-2}}\quad &\text{if}\ \tau \in (\frac{5}{2},3]\\ \frac{\lambda^{2\tau-3}}{\log(1/\lambda)^{2\tau-4}}\quad &\text{if}\ \tau \in (3,\infty)\end{cases}.\label{eq:product-rates}
\end{align}
This result seems to have a universal character as the same rate of decay has been shown for hyperbolic random graphs by Linker et al. \cite{LinMSV2021}. The latter can be seen as a geometric variant of the configuration model, as in both cases the probability to form an edge between two given vertices depends on the product of independent weights which are  assigned to each vertex in the graph. For further results on the density of infected vertices at a time when the contact process is still alive on the configuration model with $\tau \in (1,2]$ and preferential attachment networks see \cite{CanS2015} and \cite{Can2017}. To obtain these estimates the analysis of the contact process on the corresponding limit graphs is important, i.e. the corresponding Galton-Watson process for the configuration model and the infinite hyperbolic model. In fact, for these models the rate of decay given by $\rho_\tau$ coincides with the rate of decay of the probability that the extinction time is infinite on the limit graphs when $\lambda$ goes to zero. Thus, the study of the non-extinction probability is a crucial step to obtain the stated metastability results. 

In this work we will study the contact process on a large class of geometric random graphs on a Poisson point process on $\R^d$, which have a scale-free degree distribution and allow the occurence of edges which span a large distance between two vertices, see Section \ref{subsec:framework} for a formal definition. Two motivating examples of this class, which are shortly introduced in the following,  are a soft version of the scale-free Boolean model introduced in \cite{GraGM2022} and the \emph{age-dependent random connection model} introduced in \cite{GraGLM2019} which emerges as a limit graph of a spatial preferential attachment network. 

\paragraph{The soft Boolean model}
In the Boolean model on a Poisson point process on $\R^d$ each vertex $x$ carries an independent, identically distributed random radius $R_x$, which we assume to be heavy-tailed, i.e. there exists $\gamma\in (0,1)$ such that 
\[
\P(R_x>r) \asymp r^{-d/\gamma}\ \text{as}\ r\to \infty.
\]
In the \emph{hard} version of the model two vertices are connected by an edge if the balls centered at the vertices locations with associated radii intersect. We consider a \emph{soft} version of this model, where an independent, identically distributed random variable $X(x,y)$ is assigned to each unordered pair of vertices $\{x,y\}$. Then, an edge is formed between two vertices $x$ and $y$ if and only if
\[
\frac{\abs{x-y}}{R_x+R_y}\leq X(x,y).
\]
The choice $X(x,y)=1$, for all pairs $\{x,y\}$, corresponds to the hard version of the model. Writing $X$ for an independent identically distributed copy of $X(x,y)$, we assume $X$ to be heavy-tailed with decay
\[
\P(X>r) \asymp r^{-\delta d}\ \text{as}\ r\to \infty,
\]
for some $\delta>1$, leading to a relaxation of the condition to form an edge which can be interpreted in the following way. For any pair of vertices $x$ and $y$ we take a copy of their corresponding balls and expand those by multiplying both radii with $X(x,y)$. Then, we form an edge between the two vertices if and only if the expanded balls intersect.

\paragraph{The age-dependent random connection model}
In the age-dependent random connection model each vertex carries a uniform on $(0,1)$ distributed birth time and two vertices $x$ and $y$ with birth times $t$ and $s$ are connected by an edge independently with probability
\begin{equation}
\varphi\big(\beta^{-1}(t\wedge s)^\gamma (t\vee s)^{1-\gamma}\abs{x-y}^d\big), \label{eq:conn_prob}
\end{equation}
where $\gamma\in (0,1)$, $\beta>0$ and $\varphi:(0,\infty)\to [0,1]$ is a non-decreasing function which we assume to satisfy $\varphi(r) \asymp r^{-\delta}$ as $r\to \infty$. This model emerges as a limit graph of a rescaled version of the \emph{age-based preferential attachment model} introduced in \cite{GraGLM2019}. In this model vertices are added to the graph at rate of a Poisson process with unit intensity and placed on a torus of width one. A new vertex $x$ added at time $t$ forms an edge to each existing vertex $y$ with probability given by \eqref{eq:conn_prob}, where $s$ is the time vertex $y$ has been added to the graph. As $(t/s)^\gamma$ is the asymptotic order of the expected degree at time $t$ of a vertex with birth time $s$ the age-based preferential attachment model mimics the behaviour of spatial preferential attachment networks introduced in \cite{JacM2015}.

This class of geometric random graphs has been studied recently and  exhibits a phase transition in the parameters $\gamma$ and $\delta$ such that these graphs are ultrasmall, i.e. two very distant vertices have a graph distance of doubly logarithmic order of their Euclidean distance, if and only if $\gamma>\frac{\delta}{\delta+1}$, as shown in \cite{GraGM2022}. The same regime boundary depending on the parameters $\gamma$ and $\delta$ is shown to appear in the existence of a subcritical percolation phase by Gracar et al. \cite{GraGLM2019}, giving a hint of a universal behaviour of these geometric random graphs that is remarkably different to the behaviour of spatial graph models investigated in \cite{DeivHH2013}. In this work we consider the class of geometric random graphs in their ultrasmall regime $\gamma>\frac{\delta}{\delta+1}$ and in Section \ref{sec:non-ext_prob} we study the probability $\Gamma(\lambda)$ that the contact process starting in a typical vertex does not go extinct. We prove that the critical value $\lambda_c$ is zero for these graphs and we give exact asymptotics on its rate of decay, when $\lambda$ is small, see Theorem \ref{thm:non-ext_prob}. In Section \ref{sec:exp_ext_time} we study a restriction of the these graphs to boxes $[-\frac{n^{1/d}}{2},\frac{n^{1/d}}{2}]^d$, still assuming that $\gamma>\frac{\delta}{\delta+1}$, and show that the extinction time exhibits no phase transition, i.e. for any $\lambda>0$ the extinction time is of exponential order in the volume of the boxes, see Theorem \ref{thm:exp_extinction_time}.

\subsection{Framework}
\label{subsec:framework}
Let $\mathscr{G}$ be a general geometric random graph on a vertex set given by a Poisson point process $\mathcal{X}$ of unit intensity on $\mathbb{R}^d\times (0,1)$. We write $\x=(x,t)$ for a vertex of the graph where we refer to $x$ as the \emph{location} and $t$ as the \emph{mark} of the vertex $\x$. For $\x_1,\ldots,\x_n\in \mathbb{R}^d\times (0,1)$, denote by $\P_{\x_1,\ldots,\x_n}$ the law of $\mathscr{G}$ given the event that $\x_1,\ldots,\x_n$ are points of the Poisson process. We consider geometric random graphs which satisfy the following assumption on the probability of the occurence of edges in the graph. This assumption is given in terms of two parameters $\gamma\in (0,1)$ and $\delta>1$. For two vertices $\x,\y\in \mathcal{X}$ we write $\x\sim \y$ if there exists an edge between them.

\begin{assum}\label{ass:main}
Given $\mathcal{X}$, edges are drawn independently of each other and there exist $\alpha,\kappa_1,\kappa_2>0$ such that, for every pair of vertices $\x=(x,t), \y=(y,s)\in \mathcal{X}$, it holds that
\[
\alpha\, \big(1\wedge \kappa_1 \, (t \wedge s)^{-\delta\gamma} \abs{x-y}^{-\delta d}\big) \leq \P_{\x,\y}(\x\sim \y)\leq \kappa_2 \, (t\wedge s)^{-\delta\gamma} (t\vee s)^{\delta(\gamma-1)}\abs{x-y}^{-\delta d}.
\]
\end{assum}

In some parts of this paper we will consider the Palm-version of $\mathscr{G}$. More precisely, we add to $\mathcal{X}$ a vertex $(0,T_0)$, where $T_0$ is an independent on the interval $(0,1)$ uniformly distributed random variable and denote by $\mathscr{G}_{\Palm}$ the resulting graph on $\mathcal{X}\cup \{(0,T_0)\}$ determined by the connection rules satisfying Assumption \ref{ass:main}. We denote the law of $\mathscr{G}_{\Palm}$ by $\P_{\Palm}$ and since $T_0$ is independent of the underlying Poisson point process $\mathcal{X}$, it holds $\P_{\Palm} = \int_0^1\P_{(0,t_0)}\mathd t_0$. We can think of $\P_{\Palm}$ of the law of $\mathscr{G}$ conditioned on the existence of a typical vertex, i.e. a vertex with typical mark, at the origin.

The parameter $\gamma$ in Assumption \ref{ass:main} describes the influence of the vertices' marks on the connection probability and determines the degree distribution of a typical vertex in $\mathscr{G}$. As edges are drawn independently of each other, the degree of a given vertex with mark $t$ is Poisson distributed with parameter $\Lambda(t)$, where by Assumption \ref{ass:main} there exist $c,C>0$ such that it holds $ct^{-\gamma}\leq \Lambda(t)\leq Ct^{-\gamma}$ for all $t\in (0,1)$. It is easy to see that the degree of a typical vertex $(0,T_0)$ is scale-free with power-law exponent $\tau = 1 + \frac{1}{\gamma}$, see \cite{GraGLM2019}. The parameter $\delta$ controls the occurence of long edges in the graph, where small values of $\delta$ lead to more long edges. A natural example of such geometric random graphs is the weight-dependent random connection model, introduced in \cite{GraHMM2022}. One can understand Assumption \ref{ass:main} in the way that graphs satisfying the assumption are dominated by the weight-dependent random connection model with preferential-attachment kernel and dominate the model with the min kernel. 

\begin{remark}
	As we will see later, each of the individual proofs depends only on one of the two inequalities from Assumption \ref{ass:main}. One could therefore separate this assumption into two separate assumptions and therefore make the statements of individual propositions used to prove the main results could be made stronger by assuming only the relevant bound from Assumption \ref{ass:main} holds. For simplicity and to help with the heuristic understanding of the proofs we omit this distinction.
\end{remark}

\subsection{The contact process and its graphical representation}
The contact process on an arbitrary locally-finite graph $G=(V,E)$ with parameter $\lambda$ is a continuous time Markov process $(\xi_t)_{t\geq 0}$ on the space $\{0,1\}^V$. At time $t$ we say a vertex $x\in V$ is infected if $\xi_t(x) = 1$ and healthy if $\xi_t(x) = 0$. Thus, we can also view $\xi_t$ as the subset $\set{x: \xi_t(x) = 1}$ of $V$ of the infected vertices at time $t$. Infected vertices transmit the infection to each of their neighbours with rate $\lambda$ and recover with rate $1$, yielding the transition rates given by \eqref{eq:transrate}. We write $(\xi^A_t)_{t\geq 0}$ for the contact process with initial condition $A\subset V$, i.e. $\xi_0^A = A$ and $(\xi^x_t)_{t\geq 0}$ if $A = \set{x}$.

\begin{figure}[!h]
\begin{center}
\begin{tikzpicture}[scale=0.4, every node/.style={scale=1}]
\draw (0,0) -- (14,0);
\foreach \x in {0,2,4,6,8,10,12,14}
	\draw (\x,0) -- (\x,12);
\foreach \x in {(0,4),(0,8.5),(2,3),(2,11),(4,7),(8,9),(10,2.8),(10,6),(12,5),(12,10.2),(14,3.7),(14,9.3)}
	\node at \x {$\times$};
\foreach \x in {(0,3.7),(2,9.5),(8,1.7),(10,5.6),(12,7.4)}
	\draw[->,thick] \x -- +(2,0);
\foreach \x in {(4,6.3),(6,5.5),(8,6.5),(10,8.4),(14,11.2)}
	\draw[->,thick] \x -- +(-2,0);
\foreach \x in {(0,5.1),(2,8.3),(4,6.6),(6,5),(8,3.5),(10,9.5),(12,4.7)}
\draw[cyan,very thick] (6,0) -- (6,5.5) -- (4,5.5) -- (4,6.3) -- (2,6.3) -- (2,9.5) -- (4,9.5) -- (4,12);
\end{tikzpicture}
\end{center}
\caption{Sketch of the graphical representation of the contact process on $\Z$. Arrows represent (potential) infection transmissions and crosses represent the recovery marks. The blue path is one potential infection path in this representation starting in the fourth vertex.}\label{fig:graphconst}
\end{figure}
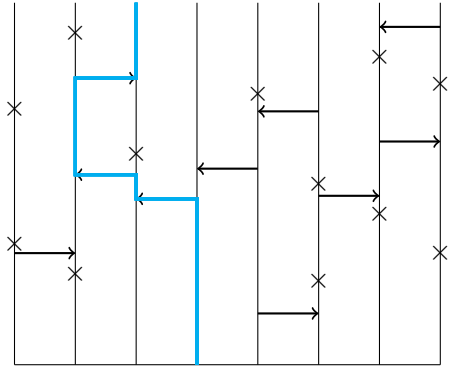

A very useful description of the contact process is its graphical representation given by a family of independent Poisson processes on $[0,\infty)$. Assign to each vertex $x \in V$ a Poisson process $N_x$ on $[0,\infty)$ of rate 1. For each edge in $G$ with endvertices $x$ and $y$, assign to each of the pairs $(x,y)$ and $(y,x)$ a Poisson process $N_{(x,y)}$, resp. $N_{(y,x)}$, on $[0,\infty)$ with rate $\lambda$.
We can think of every element $t\in N_x$ as a recovery mark at $x$ at time $t$, and every element $t\in N_{(x,y)}$ as a transmission arrow from $x$ to $y$ at time $t$. Hence, on $V\times [0,\infty)$ we assign a recovery mark at $(x,t)$ for all $t\in N_x$ and $x\in V$ and an arrow from $(x,t)$ to $(y,t)$ for all $t\in N_{(x,y)}$ and $x,y\in V$ which are connected by an edge. An \emph{infection path} in the graphical construction is a function $g:I\to V$ for some interval $I$ such that the process $(g(t),t)_{t\in I}$ on $V\times [0,\infty)$ which goes up in time never hits a recovery mark and only changes values in the first component by travelling along an arrow in its given direction. We write $(x,t) \to (y,s)$ if there exists an infection path $g:[t,s]\to V$ from $x$ to $y$, i.e. an infection path with $g(t) = x$ and $g(s) = y$. Then the contact process starting in $A$ can be derived from infection paths of the graphical construction by
\[
\xi_t^A(x) = \mathbf{1}\{A\times\{0\} \to (x,t)\},\quad t\geq 0, x \in V,
\]
see Figure \ref{fig:graphconst} for an example. This graphical representation allows to derive important properties of the contact process easily, such as its monotonicity in the initial configuration, its additivity and its \emph{self-duality} relation
\[
\P(\xi_t^A \cap B \neq \emptyset) = \P(\xi_t^B\cap A \neq \emptyset)\quad \text{for}\ A,B\subset V.
\]
For further properties of the contact process we refer to \cite{Lig1999}.

Throughout the following sections we denote by $\P$ the joint law of the contact process and the underlying geometric random graph and with a slight abuse of notation denote by $\P_{\x_1,\ldots,\x_n}$ and by $\P_{\Palm}$ the joint law of the contact process and the respective geometric random graph law under the conditions given in Section \ref{subsec:framework}.

\section{Non-extinction probability}\label{sec:non-ext_prob}
In this section we consider the probability $\Gamma(\lambda)$ that the contact process with parameter $\lambda$ on $\mathscr{G}_{\Palm}$ starting in the origin $(0,T_0)$ does not go extinct, i.e. 
\begin{equation}
\Gamma(\lambda) = \P_{\Palm}\big(\xi_t^{\Palm} \neq \emptyset\ \forall\, t\geq 0\big).
\end{equation}
Our main result describes the asymptotic behaviour of $\Gamma(\lambda)$ as $\lambda$ becomes small. We write $f(\lambda) \asymp g(\lambda)$, if there exist two positive constants $c,C>0$ such that $cf(\lambda) \leq g(\lambda) \leq Cf(\lambda)$ for $\lambda$ sufficiently small.

\begin{theorem}\label{thm:non-ext_prob}
Let $\mathscr{G}$ be a general geometric random graph which satisfies Assumptions \ref{ass:main} for $\gamma> \frac{\delta}{\delta+1}$. Then, as $\lambda\to 0$,
\begin{equation}
\Gamma(\lambda) \asymp \frac{\lambda^{2/\gamma-1}}{\log(1/\lambda)^{(1-\gamma)/\gamma}}. \label{eq:non-ext-proborder}
\end{equation}
\end{theorem}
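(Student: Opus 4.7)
The plan is to organise both directions of the estimate around the critical mark threshold
\[
t^\star := (\lambda^{2}/\log(1/\lambda))^{1/\gamma},
\]
calling a vertex a \emph{hub} when its mark lies in $(0, t^\star]$. By Assumption \ref{ass:main} a hub has degree of order $(t^\star)^{-\gamma} = \log(1/\lambda)/\lambda^{2}$, which by the classical star argument of Berger--Borgs--Chayes--Saberi implies that once the centre of such a hub is infected, the induced star sustains the infection for time at least $\lambda^{-c}$ with high probability. A direct computation using either inequality in Assumption \ref{ass:main}, i.e.\ integrating $1 \wedge \kappa\, s^{-\delta\gamma}|x|^{-\delta d}$ over $x \in \R^{d}$ and then $s\in(0,t^\star]$, shows that the expected number of hub neighbours of an arbitrary vertex, in particular of the typical vertex $(0,T_{0})$, is of order $(t^\star)^{1-\gamma}$. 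Consequently
\[
\lambda\,(t^\star)^{1-\gamma}\;\asymp\;\lambda^{2/\gamma-1}/\log(1/\lambda)^{(1-\gamma)/\gamma}
\]
is the quantity whose matching two-sided bound must be established.

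For the lower bound I would first use the lower bound in Assumption \ref{ass:main} to show that, with probability of the correct order $\asymp \lambda (t^\star)^{1-\gamma}$, the typical vertex $(0,T_{0})$ infects some hub neighbour before its own recovery clock rings. Conditional on this event I would run a hub cascade: choose a small constant $\alpha>1$ and set $t_{n+1}:=t_{n}^{\alpha}$, then show inductively that an infected hub of mark $t_{n}$ survives as a star for time at least $\exp(c\lambda^{2}t_{n}^{-\gamma})$ and that during this window it transmits infection to at least one of its $\asymp t_{n}^{\alpha(1-\gamma)}$ neighbours of mark $\leq t_{n+1}$ with conditional probability $1-o(1)$. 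The choice of such $\alpha$ is possible precisely because the ultrasmall condition $\gamma>\delta/(\delta+1)$ holds, and since $t_{n}\to 0$ super-exponentially the cascade continues forever with probability bounded away from $0$.

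For the matching upper bound I would truncate at the same threshold: let $\mathscr{G}^{>t^\star}$ denote the subgraph of $\mathscr{G}_{\Palm}$ induced by the non-hub vertices. The key step is to show that, for $\lambda$ small, the contact process on $\mathscr{G}^{>t^\star}$ is \emph{strongly subcritical}, in the sense that starting from any non-hub seed the expected total number of transmission arrows ever used is bounded. Since non-extinction on $\mathscr{G}_{\Palm}$ then forces at least one transmission arrow to cross from a non-hub to a hub, the upper bound in Assumption \ref{ass:main} gives that the per-attempt probability of such a crossing is $O(\lambda(t^\star)^{1-\gamma})$, and multiplying by the bounded expected number of attempts yields the required upper estimate on $\Gamma(\lambda)$.

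The hard part will be establishing strong subcriticality of the contact process on $\mathscr{G}^{>t^\star}$. Removing only the hubs still leaves a graph with heavy-tailed degrees (now with a $\lambda$-dependent cut-off) and long edges whose abundance is governed by $\delta$, so a naive first-moment or branching-process comparison is too lossy. I anticipate that a multiscale or path-counting argument is needed, and that the hypothesis $\gamma>\delta/(\delta+1)$ enters precisely to ensure that the effective offspring mean of the branching process dominating the non-hub dynamics tends to $0$ as $\lambda\to 0$, giving the required stochastic boundedness of both the extinction time and the number of transmission attempts made before extinction of the truncated contact process.
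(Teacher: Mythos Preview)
Your lower-bound cascade has a genuine gap. You claim that an infected hub of mark $t_n$ has $\asymp t_n^{\alpha(1-\gamma)}$ direct neighbours of mark $\le t_{n+1}=t_n^{\alpha}$, and that it infects one of them with conditional probability $1-o(1)$. But under Assumption~\ref{ass:main} the expected number of such neighbours is
\[
\int_0^{t_{n+1}}\!\!\int_{\R^d}\P_{\x,\y}(\x\sim\y)\,\mathd y\,\mathd s
\;\asymp\;\int_0^{t_{n+1}} s^{-\gamma}\,\mathd s
\;\asymp\; t_{n+1}^{1-\gamma}\;=\;t_n^{\alpha(1-\gamma)},
\]
which tends to \emph{zero}, not infinity, as $t_n\to 0$; so with high probability the hub has \emph{no} direct neighbour of smaller mark, and no amount of survival time on the star can repair this. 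This is precisely the ``direct spreading'' mechanism that the paper rules out in the discussion following Theorem~\ref{thm:non-ext_prob}: in this class of graphs two powerful vertices are typically \emph{not} adjacent, but are linked through a \emph{connector} of mark close to $1$. The paper's lower bound (Proposition~\ref{prop:contact_lower_bound}) therefore builds a half-line of stars $\mathbb{L}_r$ in which successive hubs $\x_k,\x_{k+1}$ are joined by a two-step path through such a connector (Lemma~\ref{lem:twoconn}, Lemma~\ref{lem:chainofstars}), and the ultrasmall hypothesis $\gamma>\delta/(\delta+1)$ is used exactly to guarantee that enough connectors exist. Your cascade would need this two-step modification to work.

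Your upper-bound outline is in the right spirit but underestimates the delicacy. The paper does not prove ``strong subcriticality'' of the non-hub graph in the blunt sense you describe; instead it uses a finer four-way decomposition (equation~\eqref{eq:sum_bound}) with several thresholds $T_\sigma<T_{\sigma_0}<T_{\sp}<T_{\st}$, and controls infection paths among non-hubs via the ordered-trace machinery of Linker et al.\ (Lemmas~\ref{lem:cite_1}--\ref{lem:cite_3}) combined with a recursive path-counting bound (Lemma~\ref{lem:nu_bound}). Note also that the upper bound (Proposition~\ref{prop:contact_upper_bound}) requires only $\gamma>\tfrac12$, not $\gamma>\delta/(\delta+1)$; the latter condition is needed solely for the lower bound, so your expectation that it enters the subcriticality argument is misplaced.
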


As stated in the introduction for the ultrasmall regime the non-extinction probability is positive for any $\lambda>0$ and therefore the critical value when the contact process dies out is almost surely zero. To compare the result of Theorem \ref{thm:non-ext_prob} to the rates \eqref{eq:product-rates} given in \cite{MouVY2013}, resp. \cite{LinMSV2021}, for the contact process on the configuration model and on hyperbolic random graphs, note that $\gamma\in (\frac{\delta}{\delta+1},1)$ implies $\tau\in (2,3)$ since $\delta>1$. As $\tau = 1 + \frac{1}{\gamma}$ the rate given in \eqref{eq:non-ext-proborder} matches the case $\tau \in (\frac{5}{2},3]$ in \eqref{eq:product-rates}. 
To see the reason why for geometric random graphs satisfying Assumption \ref{ass:main} only this case appears, it is helpful to look at the survival strategies of the infection leading to the two cases of \eqref{eq:product-rates} for which $\tau\in (2,3]$. If $\tau\in (2,\frac{5}{2}]$, the graph is so well connected that an infected vertex with a high degree, i.e. a small mark, has with high probability at least one neighbour with an even smaller mark to which the vertex transmits the infection. Thus, when the origin infects a relatively powerful vertex, i.e. a vertex with a small mark, with high probability the infection passes directly to more and more powerful vertices and therefore survives. 
This way of \emph{direct spreading} does not work sufficiently well when $\tau\in (\frac{5}{2},3]$ as the graph is not connected well enough. In this case the survival strategy relies on the observation that, when a vertex with sufficiently high degree is infected, the infection survives so long in the neighbourhood of the vertex which forms a star, that it reaches with high probability another vertex with similarly high degree from which this kind of \emph{delayed spreading} can repeat, see \cite[Section 3]{MouVY2013}. 
For geometric random graphs satisfying Assumption \ref{ass:main} the strategy of direct spreading does not work, as for a vertex with mark $t$ the expected number of neighbours with mark smaller than $t$ does not increase when $t$ becomes small, unlike in the configuration model or hyperbolic random graphs. Instead, two vertices with small mark are usually not connected directly but via a \emph{connector}, a vertex with mark near one. This additional necessary step to transmit the infection to a vertex with smaller mark makes this strategy worse than the strategy of delayed spreading, which still works for the class of geometric random graphs, see Proposition \ref{prop:contact_lower_bound}, yielding that only the later one appears in Theorem \ref{thm:non-ext_prob}. The same behaviour also holds for dynamical non-spatial preferential attachment with slow update rate, as studied in \cite{JacLM2022}.

\begin{remark}
In recent work on a similar class of geometric random graphs the upper bound assumption could be relaxed by omitting the assumption of independent occurence of the edges given the Poisson point process, see \cite{GraGM2022}. This is not possible here, as the proof of the asymptotic upper bound given in Proposition \ref{prop:contact_upper_bound} requires not only the ability to control the occurence of self-avoiding paths in the graph, but also the occurence of stars, i.e. the neighbourhoods of vertices with high degree. An upper bound assumption on the existence of paths as in \cite[Assumption 1.1]{GraGM2022} does not yield any meaningful bound on the size of stars.
\end{remark}

\subsection{Lower bound}\label{subsec:lowerbound}
We dedicate this section to proving a lower bound for $\Gamma(\lambda)$ when $\lambda$ is small. Namely, we will prove the following result.

\begin{prop}\label{prop:contact_lower_bound}
Let $\mathscr{G}$ be a general geometric random graph which satisfes Assumption \ref{ass:main} with $\gamma>\frac{\delta}{\delta+1}$. Then, there exists a constant $c>0$ such that, for $\lambda$ small, it holds
\begin{equation}
\Gamma(\lambda) > c \frac{\lambda^{2/\gamma-1}}{\log(1/\lambda)^{(1-\gamma)/\gamma}}.
\end{equation}
\end{prop}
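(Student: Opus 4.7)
My plan is to prove the lower bound via a quantitative \emph{delayed spreading} strategy: the origin directly infects a vertex with an exceptionally small mark---a ``good star centre''---after which a renormalised contact process on the resulting good stars survives forever with probability bounded away from $0$. The entire exponent in \eqref{eq:non-ext-proborder} will arise from the direct-infection step, while the subsequent survival contributes only a constant factor uniform in $\lambda$.

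Set $t_* := C\lambda^{2/\gamma}\log(1/\lambda)^{-1/\gamma}$ with $C>0$ large and $k_* := c'(t_*)^{-\gamma} \asymp \lambda^{-2}\log(1/\lambda)$ with $c'>0$ small. Call $\z=(z,t_\z)\in\mathcal X$ a \emph{good star centre} if $t_\z\leq t_*$ and $\z$ has at least $k_*$ neighbours in $\mathscr{G}$. By the lower bound in Assumption~\ref{ass:main}, the degree of a vertex of mark $\leq t_*$ dominates (given $\mathcal X$) a Poisson variable with mean $\asymp(t_*)^{-\gamma}$, so a Chernoff bound gives a uniform lower bound $p_{\mathrm{star}}>0$ on the probability that such a vertex is a good star centre. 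A classical star-and-return argument for the contact process on a $k_*$-star (as in \cite{BerBCS2005,MouVY2013}) then shows that, once the centre is infected, the process survives until time $T(\lambda) := \lambda^{-A}$ for any fixed $A>0$ with probability at least a uniform $p_1>0$, because $\lambda^2 k_* = \log(1/\lambda)\to\infty$.

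The key estimate is a lower bound on the probability that the origin $(0,T_0)$ directly infects a good star centre before recovering. Restricting to the constant-probability event $\{T_0\geq 1/2\}$, the Mecke formula together with the lower bound in Assumption~\ref{ass:main} yields
\begin{equation*}
\E_{\Palm}\bigl[\#\{\z\in\mathcal X : \z\sim(0,T_0),\ t_\z\leq t_*\}\bigr] \ \geq\ \alpha\!\int_0^{t_*}\!\int_{\R^d}\!\bigl(1\wedge\kappa_1 t^{-\delta\gamma}|x|^{-\delta d}\bigr)\,\mathrm dx\,\mathrm dt \ \asymp\ (t_*)^{1-\gamma},
\end{equation*}
where $\delta>1$ makes the spatial integral finite and produces a factor $t^{-\gamma}$, and $\gamma<1$ then gives the $t$-integral $(t_*)^{1-\gamma}$. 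A Paley--Zygmund/Poisson second-moment argument upgrades this expectation to a lower bound of the same order on the probability that at least one such neighbour exists, and by conditional independence that such a neighbour is moreover a good star centre to which the origin transmits before its first recovery---the latter contributing an additional factor $c\lambda$. Altogether,
\begin{equation*}
\P_{\Palm}\bigl((0,T_0)\ \text{directly infects a good star centre}\bigr) \ \geq\ c\,\lambda\,(t_*)^{1-\gamma} \ \asymp\ \frac{\lambda^{2/\gamma-1}}{\log(1/\lambda)^{(1-\gamma)/\gamma}},
\end{equation*}
which is exactly the target rate.

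The main obstacle is then to prove that, conditional on some good star centre having been infected, the contact process survives forever with probability bounded below by a constant $p_\infty>0$ independent of $\lambda$. I plan to handle this by renormalisation on the random graph of good stars: during the survival window $[0,T(\lambda)]$ from the previous step an infected good star $\z$ passes the infection to another good star within a Euclidean ball around $z$ of radius a large negative power of $\lambda$, with probability at least some $p_2>0$. The required inputs are (i) the Poisson density $\asymp t_*$ of potential good star centres inside such a ball, (ii) the abundance of ``connector'' leaves of $\z$---leaves with mark close to $1$, of which there are $\asymp k_*$ by the lower bound in Assumption~\ref{ass:main}---and (iii) the uniformly positive direct-connection probability between a connector and a nearby good star centre. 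Iterating yields a stochastic domination by a supercritical Galton--Watson tree of infected good stars, so eternal survival of the contact process holds with positive probability uniform in $\lambda$. Combining with the direct-infection estimate concludes the proof.
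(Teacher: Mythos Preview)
Your direct-infection step is exactly right and matches the paper: the origin infects a neighbour of mark $\leq t_*\asymp\lambda^{2/\gamma}\log(1/\lambda)^{-1/\gamma}$ with probability of order $\lambda(t_*)^{1-\gamma}$, which is the claimed rate. The paper carries this out in the same way (with $T_{\sp}=t_*$), exploring a growing ball around the origin until the nearest such neighbour $\x$ is found, so that the half-space $\R^d_{\geq x}$ beyond $\x$ is still fresh for the survival construction.

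The substantive difference is in the survival step. You propose a Galton--Watson renormalisation on good stars of a \emph{fixed} mark threshold $t_*$: an infected good star reaches, via its connector leaves, other good star centres in a ball of volume roughly $(t_*)^{-\gamma-\gamma/\delta}$, and since the density of candidates is $\asymp t_*$ this ball contains of order $(t_*)^{1-\gamma-\gamma/\delta}\to\infty$ of them, using $\gamma>\delta/(\delta+1)$. This is plausible, but the independence behind a genuine Galton--Watson domination is not addressed: successive generations share the same Poisson process and the same pool of connectors, and with a fixed mark threshold each step of a \emph{chain} succeeds only with some $p<1$ bounded away from $1$ uniformly in the step, so a chain argument fails and you really do need branching---which then forces you to carve space into disjoint cones or boxes and to allocate leaves to targets so that offspring events are independent. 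None of this is in the sketch.

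The paper sidesteps this by letting the stars become \emph{more} powerful along a single chain. In the fresh half-space $\R^d_{\geq x}$ it sets up annuli with radii $R_k\asymp(t_*)^{-(\gamma+\gamma/\delta)/d}e^{k(\gamma+\gamma/\delta)/d}$ and mark windows $T_k\asymp (t_*)^{\theta}e^{-k\theta}$ for a suitable $1<\theta<\gamma+\gamma/\delta$. Because both the annulus volume and the connector count grow exponentially in $k$, step $k$ of the chain succeeds with probability at least $1-\exp(-ce^{k\varepsilon})$, so the infinite product is positive and a deterministic subgraph isomorphic to the half-line of stars $\mathbb{L}_r$ exists with high probability (Lemma~\ref{lem:chainofstars}). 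Survival of the contact process on $\mathbb{L}_r$ with $r\asymp\lambda^{-2}\log(1/\lambda)$ is then quoted from \cite[Lemma~2.4]{LinMSV2021}. This escalation trick trades your branching for a summable error along one chain and is easier to make rigorous; your route can be made to work too, but only after the independence bookkeeping you left out.
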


For the proof of Proposition \ref{prop:contact_lower_bound} we exploit the following observation. Let us denote by a \emph{star} a connected graph where all but one vertex have degree one. Then, the contact process restricted to a subgraph isomorphic to a star survives for a constant time if the subgraph consists of at least order $\lambda^{-2}$ vertices and survives even long enough to infect other neighbouring stars if the subgraph consists of order $\log(1/\lambda)\lambda^{-2}$ vertices, see \cite{MouVY2013}. 
We denote by $\mathbb{L}_r$ the graph consisting of the half-line $\mathbb{N}_0$, where to each even vertex $m\in \mathbb{N}_0$, $r$ additional distinct neighbours are attached. Thus, $\mathbb{L}_r$ forms a half-line of stars consisting of $r+1$ vertices, where two consecutive stars are connected via a path of two edges. 
Throughout this section we denote the vertex $0\in \mathbb{N}_0$ as the origin of $\mathbb{L}_r$. Notice that if the size $r$ of the stars is of order $\log(1/\lambda)\lambda^{-2}$ and only the origin of $\mathbb{L}_r$ is infected, there exists a constant $p>0$ such that the survival probability of the contact process on $\mathbb{L}_r$ is at least $p$. This is direct consequence of \cite[Lemma 2.4]{LinMSV2021}, since the stars are sufficiently large so that  whether two stars are connected by a single edge or a path of bounded length makes no difference. 
Thus, as a first step we will show that such a half-line of stars exists in $\mathscr{G}_{\Palm}$. For $x\in \mathbb{R}^d$, denote by $H_x$ the plane through $x$ with normal vector $x$. Consequently, $H_x$ divides $\R^d$ in two subsets and we denote by $\R^d_{\geq x}$ the subset that does not contain zero. As discussed in Section \ref{subsec:framework} the expected degree of a given vertex in $\mathscr{G}$ with mark $t$ is of order $t^{-\gamma}$. We therefore call vertices with small mark powerful vertices. For $\beta>0$ and $r := \beta\log(1/\lambda)\lambda^{-2}$, let $T_{\sp} := r^{-1/\gamma}$ be the threshold such that vertices with smaller mark have an expected degree of order at least $\log(1/\lambda)\lambda^{-2}$. 

\begin{lemma}\label{lem:chainofstars}
Let $\x = (x,t) \in \mathbb{R}^d\times (0,1)$ with $t<T_{\sp}$. Then, given that $\x$ is a vertex of $\mathscr{G}_{\Palm}$, we have with high probability as $\lambda\to 0$ that there exists a subgraph of $\mathscr{G}_{\Palm}$ in $\R^d_{\geq x}\times (0,1)$ which is isomorphic to $\mathbb{L}_r$ such that the origin of $\mathbb{L}_r$ is identified with $\x$.
\end{lemma}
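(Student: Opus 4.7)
The plan is to construct $\mathbb{L}_r$ inductively, extending the half-line of stars one step at a time into $\R^d_{\geq x}$, with $\x$ identified with the origin of $\mathbb{L}_r$ (an even vertex, hence the first star center). The key feature of $T_\sp$ is that any vertex with mark $t < T_\sp$ has expected degree $\Lambda(t) \gtrsim t^{-\gamma} \geq r$ by the lower bound in Assumption \ref{ass:main}, so a Chernoff-type tail bound for Poisson random variables gives that such a vertex has at least $r$ neighbors with probability tending to one as $\lambda \to 0$. Since $\x$ lies on the boundary plane $H_x$ of $\R^d_{\geq x}$, a symmetry argument shows that a constant fraction of its neighbors lies in the desired half-space, and up to absorbing this constant into $\beta$ (equivalently, enlarging $r$ by a fixed factor) there are at least $r$ leaves of $\x$ in $\R^d_{\geq x}$ whp.

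For the inductive step, suppose the chain has been built up to the center $\x_{2i}$. The task is to produce in a suitable forward search region $B_i \subset \R^d_{\geq x}$ three objects simultaneously: (i) a next center $\x_{2i+2}$ with mark $< T_\sp$, (ii) a connector $\x_{2i+1}$ joined by edges of $\mathscr{G}_{\Palm}$ to both $\x_{2i}$ and $\x_{2i+2}$, and (iii) at least $r$ further vertices connected to $\x_{2i+2}$ serving as its leaves. A first moment computation on the underlying PPP combined with the lower bound in Assumption \ref{ass:main} allows one to choose the spatial scale of $B_i$ and the candidate mark range for the connector so that each of (i)--(iii) occurs with probability bounded below uniformly in $\lambda$, and hence the per-step success probability is at least a fixed constant $p > 0$.

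To obtain the infinite subgraph I arrange the search regions $B_0, B_1, \ldots$ to be pairwise disjoint, for example as a sequence of boxes marching in the positive $e_1$-direction inside $\R^d_{\geq x}$, so that the restrictions of the PPP and connection randomness to different $B_i$ are independent. The chain is extended by accepting the first successful attempt within each successive block of disjoint regions; since each attempt is independent and succeeds with probability at least $p$, the infinite chain of stars exists almost surely, and combined with the whp existence of the leaves at $\x$ this yields a subgraph of $\mathscr{G}_{\Palm}$ isomorphic to $\mathbb{L}_r$ with probability tending to one as $\lambda \to 0$.

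The main obstacle I anticipate is the single-step existence estimate. Under the lower bound of Assumption \ref{ass:main}, powerful vertices (mark $< T_\sp$) occur with typical spacing $\sim T_\sp^{-1/d}$, while an ``ordinary'' connector with mark near $1$ has natural reach only $\sim T_\sp^{-\gamma/d} \ll T_\sp^{-1/d}$. A naive midpoint-connector construction is therefore insufficient: one must either search in larger regions or allow candidate connectors to carry a smaller mark, extending their reach at the price of being rarer. Balancing these two scales relies on the ultrasmall-regime hypothesis $\gamma > \delta/(\delta+1)$, which is precisely the threshold that makes the relevant first moment yield a uniformly positive per-step success probability.
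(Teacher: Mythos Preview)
Your argument has a genuine gap in the final step. With a per-step success probability that is merely ``uniformly positive'' --- some fixed $p\in(0,1)$ --- and independent steps, the probability that \emph{every} step succeeds is $\prod_{i\geq 0}p=0$, so the infinite chain almost surely fails, not almost surely exists. A retry mechanism (``accept the first successful attempt'') does not obviously rescue this: if the $k$-th retry box lies at distance $\sim k\cdot T_{\sp}^{-1/d}$ from the current centre $\x_{2i}$, the expected number of connectors in Lemma~\ref{lem:twoconn} picks up a factor $k^{-\delta d}$, so the per-attempt success probability is not bounded below uniformly in $k$, and you have not argued that the resulting (non-i.i.d.) scheme still produces a next centre with probability tending to $1$. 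In particular, your identification of $\gamma>\tfrac{\delta}{\delta+1}$ as giving a ``uniformly positive'' first moment is correct but insufficient: you need the per-step \emph{failure} probabilities to be summable.

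The paper fixes exactly this by letting the scales grow with the step index. One chooses $\theta$ with $1<\theta<\gamma+\gamma/\delta$ (this is where $\gamma>\tfrac{\delta}{\delta+1}$ enters), sets $T_k=T_{\sp}^{\theta}e^{-k\theta}$ and $R_k=\tfrac12 T_{\sp}^{-(\gamma+\gamma/\delta)/d}e^{k(\gamma+\gamma/\delta)/d}$, and uses the disjoint annular regions $S_k=\tilde A_k\times[T_{k+1},T_k)$ for the $k$-th centre, together with $S_k^{(1)}$ and $S_k^{(2)}$ for leaves and connectors. Because the annuli grow and the mark thresholds shrink geometrically, each of the three events (box non-empty, $\geq r$ leaves, connector to the previous centre) fails with probability at most $\exp(-c\,e^{k\varepsilon})$ for some $\varepsilon>0$; the infinite product of the complements then converges to $1$ as $\lambda\to 0$. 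Your constant-scale boxes give only $1-e^{-c}$ for the ``box non-empty'' event, which is the step that breaks the product.
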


To prove the existence of such a subgraph, we decompose $\R^d_{\geq x}\times (0,1)$ into distinct parts, where areas with small marks represent potential midpoints of stars and areas with large mark represent either neighbours of the midpoints or vertices which are connected by an edge to two distinct midpoints. Choose $\theta>0$ such that $1<\theta<\gamma+\gamma/\delta$ and note that his is always possible since $\gamma>\frac{\delta}{\delta+1}$. Set 
\[
T_k = T_{\sp}^{\theta} e^{-k \theta}\ \text{and}\ R_k = \tfrac{1}{2}T_{\sp}^{-(\gamma+\gamma/\delta)/d}e^{k(\gamma+\gamma/\delta)/d},\quad \text{for}\ k\in \N
\] and $R_0 = 0$. Given the vertex $\x=(x,t)$ define, for $k\in \N$, the anuli $A_k := B(\x,R_k)\cap B(\x,R_{k-1})^c$ and the sets
\[
S_k := \tilde{A}_k \times [T_{k+1},T_k),\quad S^{(1)}_k := \tilde{A}_k \times [1/2,3/4)\ \text{and}\ S^{(2)}_k := \tilde{A}_k \times [3/4,1),
\]
where $\tilde{A}_k := A_k\cap \R^d_{\geq x}$, see Figure \ref{fig:anuli_struct}. Notice that all these sets are disjoint and therefore the point processes restricted to these sets are independent. For the proof of Proposition \ref{lem:chainofstars} it will be helpful to interpret 
\begin{itemize}
\item the vertices in $S_k$ as the potential midpoints of the $k$-th star of the subgraph,
\item the vertices in $S^{(1)}_{k}$ as the potential neighbours of the midpoints forming a sufficiently large star
\item the vertices in $S^{(2)}_k$ as the potential \emph{connectors} between consecutive midpoints, i.e. vertices which are connected to both midpoints.
\end{itemize}

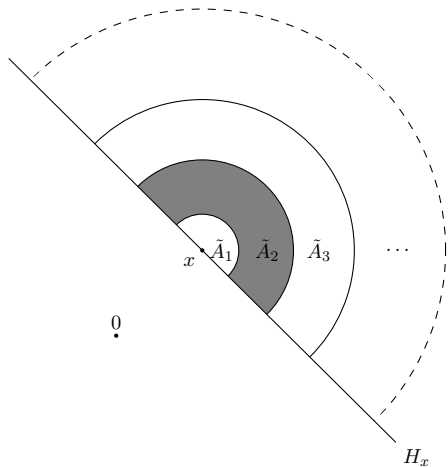
\begin{figure}[]
\begin{center}
\begin{tikzpicture}[scale=0.4, every node/.style={scale=0.7}]
\begin{scope}
    \clip[rotate around={45:(0,0)}] (0,-8) rectangle (8,8);
    \draw[fill=gray] (0,0) circle(3);
    \draw[fill=white] (0,0) circle(1.2);
    \draw (0,0) circle(5);
    \draw[dashed] (0,0) circle(8);
\end{scope}
    \fill (0,0) node[below left] {$x$} circle[radius=2pt]; 
    \draw[rotate around={45:(0,0)}] (0,-9) node[below right] {$H_x$} -- (0,9);
    \fill[rotate around={45:(0,0)}] (-4,0) node[above] {$0$} circle[radius=2pt]; 
    \node[right] at (0,0) {$\tilde{A}_1$};
    \node[right] at (1.5,0) {$\tilde{A}_2$};
    \node[right] at (3.2,0) {$\tilde{A}_3$};
    \node[right] at (5.8,0) {$\cdots$};
\end{tikzpicture}
\caption{The annuli $(A_k)_{k\in \N}$ centered around $x$ and truncated with respect to $H_x$, where the area $\tilde{A}_2$ is shaded grey.}\label{fig:anuli_struct}
\end{center}
\end{figure}

%

Before we prove Lemma \ref{lem:chainofstars}, we state the following lemma as a corollary of \cite{JacM2017}, which follows with similar calculations as done in \cite{GraGM2022}. With this lemma we can ensure that two vertices from consecutive areas $S_k$ and $S_{k+1}$ are connected via a connector with high probability.

\begin{lemma}\label{lem:twoconn}
Given two vertices $\x_k\in S_k$ and $\x_{k+1}\in S_{k+1}$, the number of vertices in $S_k^{(2)}$ which form an edge to $\x_k$ and $\x_{k+1}$ is Poisson-distributed with parameter larger than 
\begin{equation}
C t_k^{-\gamma} \big(1\wedge t_{k+1}^{-\gamma\delta}\big(\abs{x_{k+1}-x_k} + t_{k}^{-\gamma/d}\big)^{-d\delta}\big), \label{eq:twoconn}
\end{equation}
where $C>0$ is a constant not depending on $k$.
\end{lemma}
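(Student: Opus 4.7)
The plan is to view the number of connectors as a thinned Poisson count and to bound its intensity integral from below. First I would condition on $\x_k$ and $\x_{k+1}$ being points of $\mathcal{X}$; by Slivnyak's theorem the remaining points in $\tilde{A}_k \times [3/4,1)$ still form a unit-intensity Poisson point process, and by the conditional-independence-of-edges clause of Assumption \ref{ass:main}, each such point $(z,u)$ is a connector independently with probability $\P_{\x_k,(z,u)}(\x_k \sim (z,u))\,\P_{\x_{k+1},(z,u)}(\x_{k+1} \sim (z,u))$. Independent thinning therefore yields a Poisson count with parameter
\[
\Lambda = \int_{\tilde{A}_k} \int_{3/4}^1 \P_{\x_k,(z,u)}(\x_k \sim (z,u))\, \P_{\x_{k+1},(z,u)}(\x_{k+1} \sim (z,u))\, du\, dz.
\]

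Next I would insert the lower bound of Assumption \ref{ass:main}. Since $t_k \leq T_k \leq T_1 \to 0$ as $\lambda \to 0$, for $\lambda$ small one has $T_1 < 3/4$, so $t_k \wedge u = t_k$ and $t_{k+1} \wedge u = t_{k+1}$ throughout the integration range. The $u$-integral contributes a trivial factor $1/4$ and what remains is
\[
\Lambda \geq \tfrac{\alpha^2}{4} \int_{\tilde{A}_k} \bigl(1 \wedge \kappa_1 t_k^{-\delta\gamma} |x_k - z|^{-\delta d}\bigr)\bigl(1 \wedge \kappa_1 t_{k+1}^{-\delta\gamma} |x_{k+1} - z|^{-\delta d}\bigr)\, dz.
\]

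The core step is a deliberately lossy geometric restriction. Writing $r_k := \kappa_1^{1/(d\delta)} t_k^{-\gamma/d}$ so that the first bracket equals $1$ on $B(x_k, r_k)$, I discard the complement of this ball. For $z \in B(x_k, r_k)$ the triangle inequality gives $|x_{k+1} - z| \leq |x_{k+1} - x_k| + r_k$, so the second bracket is bounded below on the ball by the $z$-independent constant $1 \wedge \kappa_1 t_{k+1}^{-\delta\gamma}(|x_{k+1} - x_k| + r_k)^{-\delta d}$, which factors out of the integral.

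The main technical obstacle is then verifying that $|B(x_k, r_k) \cap \tilde{A}_k| \geq c\, r_k^d$ for some $c > 0$ independent of $k$. This is precisely where the choice $\theta < \gamma + \gamma/\delta$ is used: it forces $r_k \asymp T_k^{-\gamma/d} = T_\sp^{-\theta\gamma/d} e^{k\theta\gamma/d}$ to grow in $k$ strictly slower than $R_k \asymp T_\sp^{-(\gamma+\gamma/\delta)/d} e^{k(\gamma+\gamma/\delta)/d}$, so the ratio $r_k/R_k$ is bounded above by a fixed $\eta < 1$ and a positive fraction of $B(x_k, r_k)$ sits inside the annulus $A_k$. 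The half-space cut to $\R^d_{\geq x}$ costs at most another factor of $1/2$, since $x_k \in \R^d_{\geq x}$ forces $B(x_k, r_k)$ to contain a full half-ball on the correct side of $H_x$. Combining the volume estimate with $r_k^d \asymp t_k^{-\gamma}$ and $|x_{k+1} - x_k| + r_k \asymp |x_{k+1} - x_k| + t_k^{-\gamma/d}$, and absorbing all constants into a single $C$, yields the claimed bound \eqref{eq:twoconn}.
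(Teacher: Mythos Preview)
Your proof is correct and follows essentially the same route as the paper's: restrict the integral over $S_k^{(2)}$ to a ball of radius $\asymp t_k^{-\gamma/d}$ around $x_k$, so the first connection probability is bounded below by a constant, then pull the second connection probability out via the triangle inequality. The paper phrases the geometric volume estimate as $t_k^{-\gamma/d} < \tfrac{1}{2}(R_k - R_{k-1})$ for $\lambda$ small (equivalent to your ratio bound since $R_k - R_{k-1} \asymp R_k$), and it too asserts the $\tilde A_k$-intersection fraction $\rho > 2^{-(d+1)}$ without further detail; your treatment of this step is at the same level of rigor.
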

\begin{proof}
We consider the vertices $\z = (z,u) \in S_k^{(2)}$ with $\abs{x_k-z}^d < t_k^{-\gamma}$. Note that the volume of $B\big(x_k,t_k^{-\gamma/d}\big)\cap \tilde{A}_k$ is a positive proportion $\rho>\frac{1}{2^{d+1}}$ of the ball volume, as $t_k^{-\gamma/d} < \frac{1}{2}(R_k-R_{k-1})$ for sufficiently small $\lambda$. Then, the number of those vertices which form an edge to $\x_k$ and $\x_{k+1}$ is Poisson-distributed with parameter bounded from below by
\begin{align*}
\int_{\frac{3}{4}}^{1} \int_{B(x_k,t_k^{-\gamma/d})\cap \tilde{A}_k} \mathd \z &\alpha^2(1\wedge \kappa_1)(1\wedge \kappa_1 t_{k+1}^{-\gamma\delta} \abs{z-x_{k+1}}^{-d\delta})\\
\geq &\frac{V_d \rho\alpha^2(1\wedge \kappa_1)}{4}t_k^{-\gamma}\big(1\wedge \kappa_1 t_{k+1}^{-\gamma\delta}\big(\abs{x_{k+1}-x_k} + t_{k}^{-\gamma/d}\big)^{-d\delta}\big),
\end{align*}
where $V_d$ is the volume of the $d$-dimensional unit ball and $\alpha$ comes from Assumption \ref{ass:main}. Thus, there exist a constant $C>0$ sufficiently small such that \eqref{eq:twoconn} holds.
\end{proof}

\begin{proof}[Proof of Lemma \ref{lem:chainofstars}]
Note that, for $k\in \N$, the number of vertices in $S_k$ is Poisson-distributed with mean larger than
\[
V_d(T_{k+1}-T_k)(R_{k+1}^d-R_k^d) \geq c_1 T_{\sp}^{-(\gamma+\gamma/\delta-\theta)}e^{k(\gamma+\gamma/\delta-\theta)}
\]
for some $c_1>0$, which does not depend on $k$. Thus, $S_k$ is non-empty with probability larger than $1-\exp(-c_1 T_{\sp}^{-(\gamma+\gamma/\delta-\theta)}e^{k(\gamma+\gamma/\delta-\theta)})$. As the boxes $S_1,S_2,\ldots$ are disjoint, the numbers of vertices in each box are independent from each other. Thus, it holds
\[
\P(S_k\ \text{is non-empty}\ \forall\, k\in \mathbb{N})\geq \prod_{k=1}^\infty \bigg(1-\exp(-c_1 T_{\sp}^{-(\gamma+\gamma/\delta-\theta)}e^{k(\gamma+\gamma/\delta-\theta)})\bigg).
\]
For $k\geq 2$, on the event that $S_k$ is non-empty we denote by $\x_k$ the vertex in $S_k$ with the smallest mark. For $k=1$ we set $\x_1:= \x$ and always treat $S_1$ as non-empty. To keep notation cleaner we treat without loss of generality the mark of $\x_1$ as smaller than $T_1$ (this affects only the estimate in \eqref{eq:lowbound_connprob} below where $\gamma+\gamma\delta$ becomes $\gamma\delta$ which does not change the rest of the argument).

Given $S_k$ and $S_{k+1}$ are non-empty, the Euclidean distance of the corresponding vertices $\x_k$ and $\x_{k+1}$ is at most $2R_{k+1}$ and both vertices have a mark smaller than $T_k$. Thus, by Lemma \ref{lem:twoconn} there exists $c_2>0$ such that, for all $k\in \N$, the probability that $\x_k$ and $\x_{k+1}$ are connected via one connector in $S_k^{(2)}$ is larger than 
\begin{equation}
1-\exp(c_2T_{\sp}^{-(\gamma+\gamma\delta)(\theta-1)}e^{k(\gamma+\gamma\delta)(\theta-1)}). \label{eq:lowbound_connprob}
\end{equation}

Given $S_k$ is non-empty, we now turn our attention to the number of neighbours of $\x_k$ in $S_k^{(1)}$. As in the proof of Lemma \ref{lem:twoconn} we only consider the vertices $\z\in S_k^{(1)}$ with $\abs{x_k-z}^d<T_k^{-\gamma}$. Note that the volume of $B(x_k,T_k^{-\gamma/d})\cap A_k$ is again a positive proportion $\rho>\frac{1}{2^{d+1}}$ of the ball volume itself for $\lambda$ small enough. These vertices are connected to $\x_k$ with probability bounded from below by $\alpha(1\wedge \kappa_1)$. Thus, there exists $c>0$ such that the number of neighbours of $\x_k$ in $S_{k}^{(1)}$ with $\abs{x_k-z}^d<T_k^{-\gamma}$ is Poisson-distributed with mean larger than $cT_k^{-\gamma} \geq cre^{k\gamma}$. Thus, by a Chernoff-bound there exists $c_3>0$ such that, given $S_k$ is non-empty, the probability that the number of neighbours of $\x_k$ is larger than $r$  is at least $1-\exp(-cre^{k\gamma})$, for all $k\in \N$.

We choose $0<\varepsilon<\big(\gamma+\gamma/\delta-\theta\big) \wedge \big((\gamma+\gamma\delta)(\theta-1)\big)$. Given all sets $S_k$ are non-empty, the two previously discussed events only depend on disjoint subsets of the Poisson-process. Thus, the probability that, for all $k\geq 2$,
\begin{itemize}
\item $S_k$ is non-empty,
\item $\x_k$, the vertex with smallest mark in $S_k$, has at least $r$ neighbours in $S_k^{(1)}$ and
\item $\x_{k-1}$ and $\x_{k}$ are connected via a connector in $S_{k-1}^{(2)}$
\end{itemize}
is larger than
\[
\prod_{k=1}^\infty \big(1-\exp(-T_{\sp}^{-\varepsilon}e^{k\varepsilon})\big)\big(1-\exp(-cre^{k\gamma})\big)
\]
which tends to one as $\lambda\to 0$ since $r$ and $T_{\sp}^{-\varepsilon}$ tend to infinity in this case.
\end{proof}

With Lemma \ref{lem:chainofstars} in hand we are now ready to complete the proof of Proposition \ref{prop:contact_lower_bound}. Starting at the origin we explore the Poisson point process by expanding a sphere centered at the origin $(0,T_0)$ until we find the nearest neighbour $\x$ of $(0,T_0)$ with mark smaller than $T_{\sp}$. As the number of neighbours of $(0,T_0)$ with mark smaller than $T_{\sp}$ dominates a Poisson-distributed random variable with parameter of order $T_{\sp}^{1-\gamma}$, the probability that we find such a neighbour $\x$ and there is a transmission from $(0,T_0)$ to $\x$ before $(0,T_0)$ recovers is larger than
\begin{equation}
 \frac{c\lambda}{1+\lambda}T_{\sp}^{1-\gamma}\geq \frac{c\lambda^{2/\gamma-1}}{\log(1/\lambda)^{(1-\gamma)/\gamma}} \label{eq:nearneighbour}
\end{equation}
for some $c>0$, where $\frac{\lambda}{\lambda+1}$ occurs as the probability that, given $\x$ is a neighbour of the origin $(0,T_0)$, $\x$ got infected by the origin before it recovers. Given the nearest neighbour $\x=(x,t)$ with mark smaller than $T_{\sp}$, by Lemma \ref{lem:chainofstars}, there exists with probability larger than $\frac{1}{2}$ as $\lambda$ is small a subgraph in the yet unexplored area $\mathbb{R}_{\geq x}^d$ which is isomorphic to $\mathbb{L}_r$ with origin in $\x$. Conditioned on this subgraph being present and the origin of it being infected, the infection survives with a probability bounded away from zero, uniformly in $\lambda$, by \cite[Lemma 2.4]{LinMSV2021}. Hence, $\Gamma(\lambda)$ is up to a constant larger than the probability bound given in \eqref{eq:nearneighbour} which completes the proof of Proposition \ref{prop:contact_lower_bound}.

\subsection{Upper bound}
\begin{prop}\label{prop:contact_upper_bound}
Let $\mathscr{G}$ be a general geometric random graph which satisfies Assumption \ref{ass:main} for $\gamma>\frac{1}{2}$ and $\delta>1$. Then, there exists a constant $C>0$ such that
\begin{equation}
\Gamma(\lambda) < C \frac{\lambda^{2/\gamma-1}}{\log(1/\lambda)^{(1-\gamma)/\gamma}}.
\end{equation}
\end{prop}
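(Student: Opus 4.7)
Proof plan for Proposition \ref{prop:contact_upper_bound}.

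The strategy mirrors the lower bound in reverse: we show that, up to subleading corrections, the only surviving infections are those that quickly reach a powerful vertex. Using the same threshold $T_{\sp} = (\beta \log(1/\lambda)\lambda^{-2})^{-1/\gamma}$ as before, call a vertex \emph{strong} if its mark is below $T_{\sp}$ and \emph{weak} otherwise. Strong vertices carry stars large enough to sustain the contact process locally (by \cite[Lemma 2.4]{LinMSV2021}), while weak vertices do not. The plan is to split $\Gamma(\lambda)$ into (i) the probability that the infection ever reaches a strong vertex, and (ii) the probability of survival purely on the weak subgraph, and to bound both by the target rate.

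For part (ii) I would prove an auxiliary subcriticality lemma: starting the contact process from any weak vertex, the probability of surviving without ever touching a strong vertex is much smaller than the target rate. The heuristic is that each weak vertex has expected degree at most $T_{\sp}^{-\gamma} = \log(1/\lambda)\lambda^{-2}$, right at the critical star-size, so no local weak star can sustain infection for more than polynomial time; combined with the spatial decay from Assumption \ref{ass:main}, distant weak stars decouple and an iterative argument shows the infection collapses before reaching a strong vertex with high probability.

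For part (i) I would apply a union bound over graph-paths: for each path $\x_0 = (0,T_0) \sim \x_1 \sim \cdots \sim \x_k$ with $\x_k$ strong, the graphical representation assigns probability at most $(\lambda/(\lambda+1))^k \leq \lambda^k$ to the event that the infection traverses it, since each edge must host a transmission before the sender recovers. By Mecke's formula and the upper bound from Assumption \ref{ass:main},
\begin{equation*}
\P_{\Palm}(\text{infection reaches a strong vertex}) \leq \sum_{k \geq 1} \lambda^k \int \mathbf{1}\{t_k < T_{\sp}\} \prod_{i=0}^{k-1} \bar p(\x_i, \x_{i+1})\, d\x_1 \cdots d\x_k,
\end{equation*}
where $\bar p$ denotes the right-hand side of Assumption \ref{ass:main}. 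A direct calculation of the $k=1$ term, integrating out $\x_1=(x_1,t_1)$ with the $\min(1,\cdot)$ truncation and then averaging over $T_0$, yields a contribution of order $\lambda T_{\sp}^{1-\gamma} = \lambda^{2/\gamma-1}/\log(1/\lambda)^{(1-\gamma)/\gamma}$, exactly the target rate.

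The main obstacle is controlling the terms with $k \geq 2$, where a naive bound gives a factor $\lambda T_{\sp}^{-\gamma} = \log(1/\lambda)/\lambda \gg 1$ per extra hop and is not summable. To overcome this I would decompose space into annuli as in the proof of Lemma \ref{lem:chainofstars} and carefully integrate the chain of connection-probability upper bounds. The sharp spatial decay $|x-y|^{-\delta d}$ with $\delta>1$, together with the mark factor $(t\wedge s)^{-\delta\gamma}(t\vee s)^{\delta(\gamma-1)}$, should force each additional step to contribute only a small multiplicative factor, producing a geometric series dominated by its $k=1$ term. The hypotheses $\gamma>1/2$ and $\delta>1$ are precisely what make the resulting recursion integrals finite and contractive, and this multi-hop control together with the subcriticality lemma in (ii) is the technical core of the proof.
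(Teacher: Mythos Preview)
Your decomposition into (i) ``reach a strong vertex'' and (ii) ``survive on the weak subgraph'' has a genuine gap in part (i). The bound $\lambda^k$ per graph path only controls the probability that an infection path has \emph{exactly} that simple path as its ordered trace. But the event ``the infection reaches a strong vertex'' is witnessed by infection paths whose ordered traces may revisit vertices arbitrarily often --- for instance, bouncing inside a star of size $\lambda^{-2}$ to stay alive for a long time before jumping onward. Summing over simple graph paths therefore does \emph{not} dominate the event. The paper handles this via the ordered-trace machinery of \cite{LinMSV2021}: one must also count traces in the set $R_A$ (those whose last vertex repeats an earlier one), and invoke the ``thin'' lemma (Lemma~\ref{lem:cite_2}) to conclude that if no trace lies in $Q_A\cup R_A$ then the process dies. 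This, packaged as Lemma~\ref{lem:cite_3}, is what replaces your union bound.

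There is a second problem with the threshold. At $T_{\sp}$ weak vertices have degree up to order $\log(1/\lambda)\lambda^{-2}$, which is precisely the star size at which the contact process survives polynomially long, so the weak subgraph is \emph{not} clearly subcritical and your part (ii) cannot be established at this cutoff. The paper inserts intermediate thresholds $T_\sigma=\lambda^{(2-\sigma)/\gamma}$ and $T_{\sigma_0}$, and decomposes according to the mark of the \emph{first} neighbour the origin infects: if that mark is below $T_{\sp}$ one gets the main term; if it lies in $(T_{\sp},T_\sigma)$ one shows the conditional survival probability from there is at most $\lambda^\varepsilon$ (Lemma~\ref{lem:non_survival_mid_vertices}); if it exceeds $T_\sigma$ one shows the infection neither survives nor reaches mark $<T_\sigma$ (Lemma~\ref{lem:weak_paths_not_inf}).

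Finally, the spatial-annuli idea for $k\geq 2$ is the wrong tool. Integrating out locations first collapses each step to a constant factor $I_\rho$, reducing everything to a recursion in the marks, $\nu_{\ell,n}^{t_0}(s)$ (Lemma~\ref{lem:nu_bound}). Its growth $\alpha_n\lesssim \ell^{(1-2\gamma)n/2}$ combined with $(2\lambda)^n$ gives a geometric series with ratio of order $\lambda^{1/\gamma-1}$, summable exactly because $\gamma<1$; the hypothesis $\gamma>\tfrac12$ enters when bounding $\E|R_A^n|$ and the $P_0$ term, not through spatial decay.
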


To prove this we generalize and extend the arguments from \cite{LinMSV2021} for the geometric random graphs characterized by the framework given in Section \ref{subsec:framework}. Let the function $\rho:[0,\infty)\to [0,1]$ be defined by $\rho(x) := 1\wedge x^{-\delta}$ and denote $I_\rho
:= \int_{\R^d}\mathd x \rho(\kappa_2\abs{x}^d)<\infty$, where $\delta>1$ and $\kappa_2>0$ are given in Assumption \ref{ass:main}.
Then, by Assumption \ref{ass:main} there exists constants $c,C>0$ only depending on the parameter $\gamma$ and $I_\rho$ such that, for any vertex $\x = (x,t)\in \R^d\times (0,1)$, it holds $ct^{-\gamma} \leq 
\E_\x\deg{\x} \leq Ct^{-\gamma}$. With that in mind, let $T(n) = n^{-1/\gamma}$ be the mark of a vertex with expected degree of order $n$.

Throughout the proof we classify the vertices by their expected degree into different groups. Let $n_{\st} = \lambda^{-2}$ and, for some constant $\theta>0$ to be specified later, $n_{\sp} = \frac{\theta}{\lambda^2}\log\big(\frac{1}{\lambda}\big)$. Vertices with degree larger than $n_{\st}$ are the midpoints of stars on which the infection restricted to the star can survive for a constant time with a probability bounded away from zero, without necessarily surviving long enough such that it can reach other stars nearby. As we have seen in the proof of Proposition \ref{prop:contact_lower_bound}, this happens for stars with more than $n_{\sp}$ vertices. For $\sigma>0$, set $n_\sigma=\lambda^{-2+\sigma}$. Vertices with degree smaller than $n_\sigma$ are centers of stars which are not sufficiently large, in the sense that the infection does not propagate through such stars and dies out when the graph is restricted to vertices with such small degree; see Lemma \ref{lem:weak_paths_not_inf}.
We denote by
\[
T_{\st} := T(n_{\st}),\quad T_\sigma := T(n_\sigma),\quad T_{\circledast} := T(n_{\sp})
\]
the according mark of the vertices whose expected degree is of the corresponding order.

\begin{proof}[Proof of Proposition \ref{prop:contact_upper_bound}]
We consider now the contact process $(\xi^{\Palm}_t)_{t\geq 0}$ on $\mathscr{G}_{\Palm}$ which starts from the origin $(0,T_0)$. For a vertex $\x$, on the event that $\x$ and $(0,T_0)$ are connected we denote by $I_\x$ the event that $\x$ got infected from the origin $(0,T_0)$ before $(0,T_0)$ recovers. On $I_\x$ we denote by $\tau_\x$ the time when $\x$ got infected by the origin and by $(\eta_t^\x)_{t\geq \tau_\x}$ the contact process started at time $\tau_\x$ with a single infection at $\x$ determined by the same graphical construction as the original process $(\xi^{\Palm}_t)_{t\geq 0}$.

For $\sigma>0$, denote by $E_\sigma$ the event that each infection path $g$ starting in the origin which jumps first to a vertex with mark larger than $T_\sigma$ is finite and never reaches a vertex with mark smaller than $T_\sigma$. We will see later that when the mark of the origin is itself larger than $T_\sigma$, the probability that $E_\sigma$ occurs goes much quicker to one, when $\lambda$ goes to zero, than the rate given in \eqref{eq:non-ext-proborder}. In fact, we show in Lemma \ref{lem:weak_paths_not_inf} that for $\sigma>0$ sufficiently small, it holds
\begin{equation}
\P_{\Palm}(E_\sigma^c\cap \{T_0\geq T_\sigma\}) \leq \lambda^{2/\gamma-1+\varepsilon}.\label{eq:weak_paths_not_inf}
\end{equation}

Let $\sigma_0>0$ such that it holds $\sigma_0>\sigma$ and let $T_{\sigma_0} = T(n_{\sigma_0})$ be the associated boundary of the mark of vertices with expected degree of order $\lambda^{-(2-\sigma_0)}$. Whereas $T_\sigma$ and $T_{\sp}$ will be used to distinguish the neighbours of the origin by their marks, for the survival of the contact process we consider whether $T_0< T_{\sigma_0}$ or not. Then, we have
\begin{align}
\begin{split}
\1_{\{\xi_t^{\Palm} \neq \emptyset\ \forall\, t\geq 0\}} &\leq \1_{\{T_0< T_{\sigma_0}\}} + \1_{E_\sigma^c\cap \{T_0\geq T_\sigma\}}\\
&+ \sum_{\substack{\x\in \X\\ t\leq T_{\sp}}} \1_{\{(0,T_0)\sim \x\}}\1_{I_\x}\1_{\{T_0\geq T_{\sigma_0}\}}\\
&+ \!\!\!\!\!\sum_{\substack{\x\in \X\\ T_{\sp}<t< T_{\sigma}}}\!\!\! \1_{\{(0,T_0)\sim \x\}}\1_{I_\x}\1_{\{T_0\geq T_{\sigma_0}\}}\1_{\{(\eta_t^{\x})_{t\geq \tau_\x}\ \text{survives}\}}
\end{split}\label{eq:sum_bound}
\end{align}
In fact, if the right-hand side is zero it holds that
\begin{itemize}
\item every infection path starting in the origin which jumps in its first step to a vertex with mark larger than $T_\sigma$ is finite and never visits a vertex with mark smaller than $T_\sigma$,
\item there exists no vertex with mark smaller than $T_{\sp}$ which is a neighbour of the origin and got infected by it,
\item there exists no vertex with mark inbetween $T_\sigma$ and $T_{\sp}$, which is a neighbour of the origin and got infected by it and the infection emerging from this vertex survives.
\end{itemize}
As these three points imply that the infection $(\xi_t^{\Palm})_{t\geq 0}$ does not survive, the left-hand side is then also zero, from which the stated inequality follows. Note that this bound is especially not sharp in the second summand of the right-hand side, as we allow $T_0$ to take a larger range of values than needed. Taking expectation on both sides of \eqref{eq:sum_bound} yields an upper bound for the probability of interest and it is therefore sufficient to establish upper bounds for the expectation of each of the four summands of the right-hand side.

First, by the definition of $T_{\sigma_0}$ we have that 
\[
\P(T_0<T_{\sigma_0}) = \lambda^{(2-\sigma_0)/\gamma} < \lambda^{2/\gamma-1/2}
\]
for $\sigma_0$ and $\lambda>0$ small enough and a bound for the second summand is given by \eqref{eq:weak_paths_not_inf}, which will be proved in Lemma \ref{lem:weak_paths_not_inf}.

As mentioned beforehand, the third term on the right-hand side of \eqref{eq:sum_bound} counts the number of vertices with mark smaller than $T_{\sp}$ which got infected by the origin $(0,T_0)$. We have seen in Section \ref{subsec:lowerbound} that the contact process starting in such a vertex would ensure a spreading over a chain of infinitely many other stars with equally many neighbours. Thus, a sharp upper bound for the expected number of such vertices is crucial. By Mecke's equation, Assumption \ref{ass:main} and since $T_0$ is independent of $\X$ it holds that
\begin{align*}
\E_{(0,T_0)}\bigg[\sum_{\substack{\x\in \X\\ t\leq T_{\sp}}} \1_{\{(0,T_0)\sim \x\}}\1_{I_\x}\1_{\{T_0\geq T_{\sigma_0}\}}\bigg] &= \int_{T_{\sigma_0}}^1 \mathd t_0 \int_0^{T_{\sp}} \mathd t \int_{\R^d} \mathd x\, \E_{(0,t_0),\x}[\1_{\{(0,t_0)\sim \x\}}\1_{I_\x}]\\
&\leq \frac{\lambda}{\lambda+1}\int_{T_{\sigma_0}}^1 \mathd t_0 \int_0^{T_{\sp}} \mathd t \int_{\R^d} \mathd x \,\rho(\kappa_2^{-1/\delta}t^\gamma t_0^{1-\gamma}\abs{x}^d)\\
&\leq \lambda\frac{I_\rho}{(1-\gamma)\gamma} T_{\sp}^{1-\gamma} \leq \frac{I_\rho}{(1-\gamma)\gamma}\frac{\lambda^{2/\gamma-1}}{\log(1/\lambda)^{(1-\gamma)/\gamma}}.
\end{align*}
In fact, this is the dominant term of the right-hand side of \eqref{eq:sum_bound} which contributes to the stated upper bound.

For the last summand of \eqref{eq:sum_bound} we have by Mecke's equation, Assumption \ref{ass:main} and since $T_0$ is independent of $\X$ that
\begin{align*}
&\E_{(0,T_0)}\bigg[\sum_{\substack{\x\in \X\\ T_{\sp}<t< T_{\sigma}}}\,\, \1_{\{(0,T_0)\sim \x\}}\1_{I_\x}\1_{\{T_0\geq T_{\sigma_0}\}}\1_{\{(\eta_t^{\x})_{t\geq \tau_\x}\ \text{survives}\}}\bigg]\\
&= \int_{T_{\sigma_0}}^1 \mathd t_0 \int_{T_{\sp}}^{T_\sigma} \mathd t \int_{\R^d} \mathd x \E_{(0,t_0),\x}[\1_{\{(0,t_0)\sim \x\}}\1_{I_\x}\1_{\{(\eta_t^{\x})_{t\geq \tau_\x}\ \text{survives}\}}]\\
&\leq \frac{\lambda}{\lambda+1} \int_{T_{\sigma_0}}^1 \mathd t_0 \int_{T_{\sp}}^{T_\sigma} \mathd t \int_{\R^d} \mathd x \E_{(0,t_0),\x}[\1_{\{(\eta_t^{\x})_{t\geq \tau_\x}\ \text{survives}\}}\, \vert\, (0,t_0)\sim \x, I_\x] \rho(\kappa_2^{-1/\delta}t^\gamma t_0^{1-\gamma}\abs{y}^d).
\end{align*}

In Lemma \ref{lem:non_survival_mid_vertices} we will show that there exist $\varepsilon>0$ such that, for $\x=(x,t),\ \y=(y,s) \in \R^d\times(0,1)$ with $t>T_{\sp}$ and $s>T_{\sigma_0}$, it holds
\begin{equation}
\P_{\x,\y}(\xi_t^\x \neq \emptyset\ \forall\ t\geq 0\,\vert\, \x\sim \y)<\lambda^\varepsilon, \label{eq:non_survival_mid_vertices}
\end{equation}
where $(\xi_t^\x)_{t\geq 0}$ is the contact process of rate $\lambda$ which starts in $\x$ and only in $\x$. By this inequality and the strong Markov-property of the contact process we have that
\begin{align*}
&\E_{(0,T_0)}\bigg[\sum_{\substack{\x\in \X\\ T_{\sp}<t< T_{\sigma}}}\,\, \1_{\{(0,T_0)\sim \x\}}\1_{I_\x}\1_{\{T_0\geq T_{\sigma_0}\}}\1_{\{(\eta_t^{\x})_{t\geq \tau_\x}\ \text{survives}\}}\bigg]\\
&\leq \lambda^{1+\varepsilon} \int_{T_{\sigma_0}}^1 \mathd t_0 \int_{T_{\sp}}^{T_\sigma} \mathd t \int_{\R^d} \mathd x\rho(\kappa_2^{-1/\delta}t^\gamma t_0^{1-\gamma}\abs{x}^d)\\
&\leq \frac{I_\rho}{(1-\gamma)\gamma} \lambda^{2/\gamma-1+\varepsilon-\sigma(1/\gamma-1)} < \frac{I_\rho}{(1-\gamma)\gamma}\lambda^{2/\gamma-1+\varepsilon/2}
\end{align*}
for $\sigma>0$ and $\lambda>0$ sufficiently small which completes the proof.
\end{proof}

We now proceed to establish the probability bounds \eqref{eq:weak_paths_not_inf} and \eqref{eq:non_survival_mid_vertices} of the previous proof which have been left out. To show these bounds we need to have control over the occurrence of infection paths which corresponds to both events, i.e. infection paths which jump from the origin to a vertex with mark larger than $T_\sigma$ and infection paths starting in a vertex with mark between $T_\sigma$ and $T_{\sp}$. For these bounds we will rely on the arguments used in \cite[Section 5]{LinMSV2021}, for which we will give a short overview. 

We consider now a graph $G = (V,E)$ with root $0$ and let $P$ be the set of all finite and infinite paths of vertices in the graph $G$. Instead of looking at infection paths themselves, we look at the paths of vertices which result from infection paths by capturing the visited vertices. Precisely, for an infection path $g:I\to V$ we define its \emph{ordered trace} $p_g\in P$ as the path of vertices in $G$ given by the vertices visited by $g$ in the same order. The following result by \cite{LinMSV2021} shows the usefulness of this definition, as to control the occurence of a given class of infection paths, it is sufficient to control the number of ordered traces corresponding to this class.

\begin{lemma}(\cite[Lemma 5.1]{LinMSV2021})\label{lem:cite_1}
Let $\lambda<\frac{1}{2}$. Given $p\in P$, the probability that there exists $t\geq 0$ and an infection path $g:[0,t]\to V$ with $p$ as its ordered trace is at most $(2\lambda)^{\abs{p}}$.
\end{lemma}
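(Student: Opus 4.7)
The plan is to control the probability by a first-moment computation based on Mecke's formula. Writing $p=(v_0,v_1,\ldots,v_n)$ with $|p|=n$, the event that some infection path $g\colon[0,t]\to V$ has ordered trace $p$ is equivalent to the existence of times $0<t_1<\cdots<t_n$ such that, for each $i\in\{1,\ldots,n\}$, $t_i$ is an arrival of the transmission Poisson process $N_{(v_{i-1},v_i)}$ and the recovery process $N_{v_{i-1}}$ has no arrival in $[t_{i-1},t_i]$, with the convention $t_0=0$. Let $M$ count the number of such valid time sequences. Since $\{\exists\,g:p_g=p\}=\{M\ge 1\}$, Markov's inequality yields
\[
\P(\exists\,g:p_g=p)\le\E[M].
\]

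I would evaluate $\E[M]$ by applying the multivariate Campbell/Mecke formula to the transmission processes and then averaging out the recovery processes, yielding
\[
\E[M]=\lambda^n\int_{0<t_1<\cdots<t_n}\prod_{i=1}^n\P\!\left(N_{v_{i-1}}\cap[t_{i-1},t_i]=\emptyset\right)\mathd t_1\cdots\mathd t_n.
\]
The key observation is that the intervals $[t_{i-1},t_i]$ are pairwise disjoint up to endpoints. Hence, even when some of the vertices $v_{i-1}$ coincide, the independent-increments property of the corresponding rate-$1$ recovery processes makes the product telescope to $\prod_i e^{-(t_i-t_{i-1})}=e^{-t_n}$. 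Substituting $s_i:=t_i-t_{i-1}$ factorises the remaining integral into $\prod_i\int_0^\infty e^{-s}\mathd s=1$, so that $\E[M]=\lambda^n$, which is in fact stronger than the claimed bound $(2\lambda)^n$.

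The main technical point, and where I would be most careful, is the application of Mecke's formula when some of the directed edges of $p$ repeat, i.e.\ when two of the Poisson processes $N_{(v_{i-1},v_i)}$ coincide. In that case the prefactor $\lambda^n$ has to be read off from the higher-order factorial moment measures of a single rate-$\lambda$ Poisson process, which are still $\lambda^k\mathd t_1\cdots\mathd t_k$ on distinct $k$-tuples, so no constant is lost. The factor $2$ in $(2\lambda)^{|p|}$ is therefore not needed for this Mecke-based argument and merely provides headroom; presumably the proof in \cite{LinMSV2021} proceeds instead via strong Markov conditioning on the successive arrival times, where each step contributes a factor $\lambda/(1+\lambda)\le 2\lambda$ and the cleaner $(2\lambda)^{|p|}$ statement is convenient.
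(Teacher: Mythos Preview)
The paper does not prove this lemma at all; it is quoted verbatim from \cite[Lemma~5.1]{LinMSV2021} and used as a black box. So there is no ``paper's own proof'' to compare against here.

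That said, your argument is correct and in fact yields the sharper bound $\lambda^{|p|}$ rather than $(2\lambda)^{|p|}$. The characterisation of the event $\{\exists\,g:p_g=p\}$ in terms of ordered arrival times $0<t_1<\cdots<t_n$ with $t_i\in N_{(v_{i-1},v_i)}$ and $N_{v_{i-1}}\cap[t_{i-1},t_i]=\emptyset$ is exactly right, and the first-moment bound via the multivariate Campbell formula goes through as you describe. The two points you flag as delicate are both handled correctly: repeated directed edges cause no loss because the higher factorial moment measures of a rate-$\lambda$ Poisson process are still $\lambda^k$ on distinct tuples (and the $t_i$ are strictly ordered); and repeated vertices among the $v_{i-1}$ cause no loss because the intervals $[t_{i-1},t_i]$ are disjoint, so independent increments of the recovery clocks make the joint no-recovery probability factorise into $e^{-t_n}$.

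One small quibble: your closing speculation that the factor $2$ in \cite{LinMSV2021} comes from bounding $\lambda/(1+\lambda)\le 2\lambda$ cannot be the explanation, since $\lambda/(1+\lambda)\le\lambda$ already. The extra factor of $2$ there more plausibly arises from a cruder step-by-step bound that does not exploit the full Mecke structure; in any case your direct computation is cleaner and the constant $2$ is indeed slack.
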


We define the following subsets of $P$. Let $A\subset V$ such that the root $0$ is not in $A$ and define
\begin{itemize}
\item for $n\geq 1$, $Q_A^n$ as the set of paths in $G$ of length $n$ starting in $0$, where the first $n$ vertices are distinct and not in $A$ but the last vertex is in $A$,
\item for $n\geq 3$, $R_{A}^n$ as the set of paths in $G$ of length $n$ starting in $0$, where the first $n$ vertices are distinct and not in $A$ but the last vertex is equal to a previous one.
\end{itemize}
We denote $Q_A := \bigcup_{k\geq 1} Q_A^k$ and $R_A := \bigcup_{k\geq 3} R_A^k$. \bigskip \\
Typically $A$ is a set of vertices with small mark, for example smaller than $T_{\st}$ or $T_{\sp}$. These vertices are powerful as they have many neighbours and therefore ensure the survival of the infection for longer time. The set $Q_A^n$ then describes all ordered traces which get to such a powerful vertex in their $n$-th step. The motivation for the second set $R_A^n$ lies in the following result from \cite{LinMSV2021}. Given a vertex $x \in V$, the contact process $(\xi_t)_{t\geq 0}$ starting in $0$ is \emph{thin on $x$} if there is no infection path $g:[0,t]\to V$ for some $t\geq 0$ with $g(0) = 0$ where $x$ appears more than once in the ordered trace $p_g$ of $g$. The contact process starting in $0$ is thin on a set $V'\subset V$ if it is thin on every vertex of $V'$.
\begin{lemma}(\cite[Lemma 5.4]{LinMSV2021})\label{lem:cite_2}
If $V_0\subset V$ is finite, then on the event that  $(\xi_t)_{t\geq 0}$ is thin on $(V_0)^c$, the contact process almost surely dies out, that is, almost surely there is $t\geq 0$ such that $\xi_t = \emptyset$.
\end{lemma}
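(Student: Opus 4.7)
I would prove this by contradiction: assume that on the thinness event $\mathcal{T}$ the contact process survives with positive probability, and derive a violation of thinness.

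The starting point is the classical extinction result on finite graphs: since $V_0$ is finite, the contact process restricted to any finite subgraph almost surely reaches the empty configuration in finite time. Hence on the event $\mathcal{T}\cap\{\text{survive}\}$ the infection must enter $(V_0)^c$ infinitely often, and under $\mathcal{T}$ every such entry must land on a vertex whose ordered trace has not already been extended through it by some earlier infection path from $(0,0)$. In particular, the random set of vertices in $(V_0)^c$ that are reachable from $(0,0)$ by some infection path is a.s.\ infinite on this event.

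The next step is to control the ``exposed subgraph'' --- the union over all infection paths from $(0,0)$ of the vertices and edges they use. Under $\mathcal{T}$ this subgraph is forced to be tree-like on $(V_0)^c$: any cycle would have to pass through one of the finitely many vertices of $V_0$. I would then combine Lemma~\ref{lem:cite_1} with a union bound over ordered traces admissible under $\mathcal{T}$ (traces in which every vertex of $(V_0)^c$ appears at most once), and show that the expected number of admissible traces of length $n$ realised in the graphical construction is summable in $n$. A Borel--Cantelli type estimate then implies that a.s.\ only finitely many admissible infection paths are realised, contradicting the arbitrarily long paths required for survival.

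The main obstacle is executing the counting step in the general locally finite setting, since $G$ need not have bounded degree and $\lambda$ is not assumed to be small. The key structural input is that thinness replaces the naive ``degree-many options at each step'' by a much slower branching: once a vertex of $(V_0)^c$ has been used it is removed from the list of admissible continuations, while the only ``recyclable'' vertices sit inside the finite set $V_0$. Turning this qualitative picture into a quantitative bound on the realisation probability of long admissible paths --- probably via a pathwise analysis that absorbs the branching factor into the finite cardinality of $V_0$, and carefully tracks the recovery marks along each candidate trace --- is where I expect the bulk of the technical work to go.
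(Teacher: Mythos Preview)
The paper does not prove this lemma; it is quoted verbatim from \cite[Lemma~5.4]{LinMSV2021} and used as a black box. So there is no proof in the present paper to compare against --- only the original source.

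Your proposal has a genuine gap, which you yourself flag in the last paragraph but do not close. The route through Lemma~\ref{lem:cite_1} followed by a Borel--Cantelli estimate cannot succeed in the stated generality. First, Lemma~\ref{lem:cite_1} is stated (and only useful) for $\lambda<\tfrac12$, whereas Lemma~\ref{lem:cite_2} must hold for \emph{every} $\lambda>0$ and every locally finite $G$; for $\lambda\geq\tfrac12$ the bound $(2\lambda)^{|p|}$ carries no information. Second, even for small $\lambda$, the union bound you describe is over the deterministic set of paths in $G$ whose trace hits each vertex of $(V_0)^c$ at most once; for a general locally finite graph there is no control on how fast this set grows with the length $n$, so nothing forces it to be beaten by $(2\lambda)^n$. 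Finally, you cannot first condition on the thinness event and then sum only over ``admissible'' traces: thinness is an event in the graphical construction, not a restriction on the index set of the union bound, and conditioning on it destroys the independence underpinning the $(2\lambda)^{|p|}$ estimate.

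The argument in \cite{LinMSV2021} is of a different nature and uses no counting over traces. One works on the almost sure event that every vertex carries infinitely many recovery marks and only finitely many arrows in any bounded time window. On that event, survival of the process produces (via local finiteness and a K\"onig-type argument) an infinite infection path, and one then derives a contradiction directly from the structure that thinness imposes on a \emph{single} such path together with the finiteness of $V_0$ --- in particular by exploiting that the path cannot be trapped forever in the finite set $V_0$, while every excursion out of $V_0$ must use fresh vertices. This kind of pathwise argument is insensitive to the value of $\lambda$ and to the degree sequence of $G$, which is exactly what the statement requires and what your summation approach cannot deliver.
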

Ordered traces which are not in $Q_A \cup R_A$ do not connect to a vertex with high degree and visit each vertex at most once. Thus, by the previous lemma infection paths which have such ordered traces typically do not contribute to the survival of the contact process. A consequence is the following key result by \cite{LinMSV2021}
\begin{lemma}(\cite[Lemma 5.5]{LinMSV2021})\label{lem:cite_3}
There exists $c>0$ such that, for any $\lambda<\frac{1}{2}$, the following holds. Let $G$ be a graph with root $0$, $A\subset V$ and let $(\xi_t)_{t\geq 0}$ be the contact process on $G$ starting in $0$. Then,
\[
\P(\xi_t\neq \emptyset\ \forall t\geq 0) \leq \frac{\exp\big(c\lambda^2\deg(\0)\big)}{T}+T \sum_{p\in Q_A\cup R_A}(2\lambda)^{\abs{p}}\quad \text{for all}\ T>0.
\]
\end{lemma}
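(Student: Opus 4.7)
The plan is to combine three ingredients: Markov's inequality applied to the extinction time, the union bound on bad ordered traces provided by Lemma \ref{lem:cite_1}, and a star-domination argument invoking Lemma \ref{lem:cite_2}. Fix $T>0$ and let $\tau_G:=\inf\{t\geq 0:\xi_t=\emptyset\}$. The starting observation is that
\[
\P(\xi_t\neq\emptyset\ \forall\, t\geq 0)\leq \P(\tau_G>T)\leq \frac{\E[\tau_G\wedge T]}{T},
\]
so it is enough to bound $\E[\tau_G\wedge T]$.

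Let $B$ be the event that some infection path started at $\0$ has ordered trace belonging to $Q_A\cup R_A$. On $B$ I would simply use $\tau_G\wedge T\leq T$, and estimate $\P(B)$ by Lemma \ref{lem:cite_1} together with a union bound, getting
\[
\E[(\tau_G\wedge T)\1_B]\leq T\cdot \P(B)\leq T\sum_{p\in Q_A\cup R_A}(2\lambda)^{\abs{p}}.
\]
This produces the second term in the stated bound. The remaining task is to prove that
\[
\E[(\tau_G\wedge T)\1_{B^c}]\leq \exp\!\big(c\lambda^2\deg(\0)\big)
\]
for some absolute $c>0$; once this is done, dividing by $T$ and combining with the $B$-contribution gives the lemma.

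To establish this remaining bound I would argue as follows. On $B^c$ no infection path from $\0$ ever reaches $A$ (by the definition of $Q_A$) and no infection path revisits a vertex (by the definition of $R_A$), so every realised infection path is self-avoiding and lives in $V\setminus A$. The strategy is then to couple $(\xi_t)_{t\geq 0}$ on $B^c$ with the contact process on the star $\St_{\deg(\0)}$ consisting of $\0$ and its $\deg(\0)$ direct neighbours in $G$: each reinfection of $\0$ must come from one of its neighbours, and the absence of revisits (together with Lemma \ref{lem:cite_2}) ensures that the subtree started at a neighbour dies out without ever contributing a second independent spark to $\0$. After such a coupling, $\tau_G\wedge T$ on $B^c$ is dominated by the extinction time of the contact process on $\St_{\deg(\0)}$, whose expectation is at most $\exp(c\lambda^2 \deg(\0))$ for $\lambda<1/2$ by the standard star analysis (as used e.g.\ in \cite{MouVY2013} and \cite{LinMSV2021}).

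The main obstacle is the star-domination on $B^c$. A single neighbour $v$ of $\0$ can be reinfected by $\0$ many times, and each time $v$ may itself spread the infection deeper into $G$; one has to argue that none of these deeper excursions prolong the life of the infection at $\0$ beyond what an independent leaf attached to $\0$ would provide. The right way to formalise this is to decompose time into successive excursions in which the infection is supported on a single subtree rooted at a neighbour of $\0$; self-avoidance guarantees that these excursions use disjoint sets of edges, so that Lemma \ref{lem:cite_2} forces each excursion to terminate in finite time, and the residual dynamics at $\0$ are stochastically dominated by a contact process on a star of $\deg(\0)$ leaves. This decoupling step is the technical heart of the argument; once it is in place, the classical subcritical star bound closes the proof.
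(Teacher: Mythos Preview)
The paper does not prove this lemma; it is quoted directly from \cite[Lemma~5.5]{LinMSV2021} and used as a black box. So there is nothing in the paper to compare against, and the question is whether your argument stands on its own.

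There is a genuine gap in your treatment of $B^c$. Your central claim that on $B^c$ ``no infection path revisits a vertex'' is false. The sets $R_A^n$ are defined only for $n\ge 3$, so the length-two trace $(\0,v,\0)$ --- the root being reinfected by a neighbour --- is not in $R_A$, and crucially neither is any extension of it: once a trace begins $(\0,v,\0,\ldots)$ its first three vertices are already not distinct, and hence no longer prefix can ever lie in $Q_A^m$ or $R_A^m$. Consequently on $B^c$ there is \emph{no constraint at all} on infection paths that first bounce back through the root; after the bounce they may enter $A$, revisit vertices arbitrarily, and contribute to survival. In particular the trace $(\0,v,\0,v)$ is allowed on $B^c$, so the process is not thin on the neighbours of $\0$, and Lemma~\ref{lem:cite_2} does not apply in the way you suggest.

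This undermines the star domination. The reason the term $\exp(c\lambda^2\deg(\0))$ appears at all is precisely that the back-and-forth $(\0,v,\0,v,\ldots)$ is permitted --- these bounces are what keep a star alive --- so the exclusion of $n=2$ from $R_A$ is deliberate. But the same exclusion means that once the root has been reinfected the subsequent behaviour of the process is unconstrained by $B^c$, and your bound $\E[(\tau_G\wedge T)\1_{B^c}]\le \exp(c\lambda^2\deg(\0))$ uniformly in $T$ is not justified. Indeed, had your decomposition worked it would yield the strictly stronger inequality
\[
\P(\xi_t\neq\emptyset\ \forall t)\le \frac{\exp(c\lambda^2\deg(\0))}{T}+\sum_{p\in Q_A\cup R_A}(2\lambda)^{|p|},
\]
with no factor $T$ on the sum; the presence of that factor in the actual statement is a signal that the correct proof has to account for infection paths initiated at the (random, possibly many) reinfection times of $\0$ during $[0,T]$, and it is this union over restarts that produces the extra $T$.
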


We consider again geometric random graphs given by our framework in Section \ref{subsec:framework} and proceed to establish bounds for the number of ordered traces in $Q_A$ and $R_A$ on $\mathscr{G}$ for a suitable choice of $A$. As $A$ is a set of vertices typically defined in terms of their marks, the ordered traces in $Q_A$ and $R_A$ have restrictions on the marks of the vertices but not on their location. Thus, to control the number of such ordered traces we rely on the following definition as done in \cite{GraGM2022}. Let $\ell\in (0,1)$ be the truncation value and $\kappa\geq I_\rho$. We define, for $n\in \mathbb{N}$ and $t_0 \in (0,1)$, 
\begin{align}
\nu_{\ell,n}^{t_0}(s) :=   \int\limits_{\ell}^1 \mathd t_1 \cdots \!\!\! & \int\limits_{\ell}^1 \mathd t_{n-1}  \notag\\ & \!\!\!\!\!\!\!\!\!\!\!\! \kappa (t_{n-1}\wedge s)^{-\gamma}(t_{n-1}\vee s)^{\gamma-1}\prod_{k=1}^{n-1} \kappa (t_{k-1}\wedge t_k)^{-\gamma}(t_{k-1}\vee t_k)^{\gamma-1}\quad \text{for}\ s\in (0,1) \label{eq:non-spatial_mu_def}
\end{align}
and set $\nu_{\ell,0}^{t_0}(s) = \delta_0(t_0-s)$. Note that as defined, $\nu_{\ell,n}^\x(s)$ can be written recursively as
\begin{equation}
\nu_{\ell,n}^{t_0}(s) = \int\limits_{\ell}^1 \mathd u \, \nu_{\ell,n-1}^{t_0}(u)\kappa (u\wedge s)^{-\gamma}(u\vee s)^{\gamma-1}. \label{eq:non-spatial_mu_rec}
\end{equation}
This allows us to establish an upper bound for $\nu_{\ell,n}^\x(s)$ analogous to the non-spatial case in \cite[Lemma 1]{DerMM2012}. This is a corollary of \cite[Lemma 2.7]{GraGM2022}.
\begin{lemma}\label{lem:nu_bound}
Let $\ell\in (0,1)$ and $\nu_{\ell,n}^{t_0}(s)$ be as defined in \eqref{eq:non-spatial_mu_def}, where $\x = (x_0,t_0)$ and $s\in (0,1)$. Then, there exists a constant $c>0$, independent of $\ell$, such that for all $n\geq 2$,  
\begin{equation}
\nu_{\ell,n}^{t_0}(s) \leq \alpha_n s^{-\gamma} + 1\{s\geq \ell\}\beta_n s^{\gamma-1}, \label{eq:nu_bound}
\end{equation}
where 
\begin{align}
\begin{split}\alpha_{n+1} &= c \big(\alpha_n \log\big(\tfrac{1}{\ell}\big) + \beta_n\big)\\
\beta_{n+1} &= c \big(\alpha_n \ell^{1-2\gamma} + \beta_n\log\big(\tfrac{1}{\ell}\big)\big) \label{eq:alp_bet_def}
\end{split}
\end{align}
and $\alpha_1 = \kappa t_0^{\gamma-1}$, $\beta_1 = \kappa t_0^{-\gamma}$.
\end{lemma}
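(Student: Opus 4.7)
The plan is to prove this by induction on $n \geq 2$, using the recursion \eqref{eq:non-spatial_mu_rec}. The base case is subtle because $\nu_{\ell,1}^{t_0}(u) = \kappa(t_0\wedge u)^{-\gamma}(t_0\vee u)^{\gamma-1}$ does not itself satisfy the claimed shape \eqref{eq:nu_bound} (which is why the lemma starts at $n=2$), but the simple pointwise estimate
\[
\nu_{\ell,1}^{t_0}(u)\;\leq\;\kappa\,t_0^{\gamma-1}u^{-\gamma}+\kappa\,t_0^{-\gamma}u^{\gamma-1}\;=\;\alpha_1 u^{-\gamma}+\beta_1 u^{\gamma-1}
\]
holds on all of $(0,1)$, and in particular on the integration domain $[\ell,1]$. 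This lets us feed the $n=1$ estimate into \eqref{eq:non-spatial_mu_rec} and reduce the base case to exactly the integral that appears in the inductive step.

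For the inductive step, assume the bound at level $n$. Since the indicator $\1\{u\geq\ell\}$ is identically $1$ on $[\ell,1]$, plugging into \eqref{eq:non-spatial_mu_rec} gives
\[
\nu_{\ell,n+1}^{t_0}(s)\;\leq\;\kappa\!\int_{\ell}^{1}\!\bigl(\alpha_n u^{-\gamma}+\beta_n u^{\gamma-1}\bigr)(u\wedge s)^{-\gamma}(u\vee s)^{\gamma-1}\,\mathd u.
\]
I split into two cases. If $s<\ell$, then $u\geq\ell>s$ on the whole domain, so $(u\wedge s)^{-\gamma}(u\vee s)^{\gamma-1}=s^{-\gamma}u^{\gamma-1}$ and the resulting integral factorises as $s^{-\gamma}$ times $\kappa\bigl[\alpha_n\int_\ell^1 u^{-1}\mathd u+\beta_n\int_\ell^1 u^{2\gamma-2}\mathd u\bigr]\leq c\bigl(\alpha_n\log(1/\ell)+\beta_n\bigr)s^{-\gamma}$, which matches $\alpha_{n+1}s^{-\gamma}$ and has no $s^{\gamma-1}$ contribution, consistent with $\1\{s\geq\ell\}=0$. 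If $s\geq\ell$, I split the integral at $u=s$: on $[\ell,s]$ the kernel is $u^{-\gamma}s^{\gamma-1}$, on $[s,1]$ it is $s^{-\gamma}u^{\gamma-1}$. This produces four elementary integrals, the four bounds to record being
\[
\int_\ell^s u^{-2\gamma}\mathd u\leq C\ell^{1-2\gamma},\quad \int_\ell^s u^{-1}\mathd u\leq \log(1/\ell),\quad \int_s^1 u^{-1}\mathd u\leq \log(1/\ell),\quad \int_s^1 u^{2\gamma-2}\mathd u\leq C,
\]
with constants depending only on $\gamma$. Grouping the $s^{-\gamma}$ contributions yields the coefficient $c(\alpha_n\log(1/\ell)+\beta_n)=\alpha_{n+1}$, and grouping the $s^{\gamma-1}$ contributions yields $c(\alpha_n\ell^{1-2\gamma}+\beta_n\log(1/\ell))=\beta_{n+1}$, for a suitable $c>0$ independent of $\ell$.

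The main (mildly) tricky point is the first of the four integral bounds: $\int_\ell^s u^{-2\gamma}\mathd u$ evaluates to $(\ell^{1-2\gamma}-s^{1-2\gamma})/(2\gamma-1)$ for $\gamma>1/2$ (the regime used in Proposition~\ref{prop:contact_upper_bound}), giving the clean bound $C\ell^{1-2\gamma}$; for $\gamma\leq 1/2$ the same integral is bounded by a universal constant, which can still be absorbed into $c$ because $\ell^{1-2\gamma}\asymp 1$ or $\log(1/\ell)$ there. So a single constant $c>0$ independent of $\ell$ handles all regimes, as claimed. The statement is essentially a direct transcription of \cite[Lemma~2.7]{GraGM2022} once the spatial integration is trivialised, so no new idea is needed beyond the careful bookkeeping above.
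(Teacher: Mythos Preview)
Your argument is correct and is exactly the induction that the paper's one-line proof defers to \cite[Lemma~2.7]{GraGM2022}; the case split at $s=\ell$ and the four elementary integrals are the standard bookkeeping, and your handling of the base case via the pointwise bound $\nu_{\ell,1}^{t_0}(u)\leq\alpha_1u^{-\gamma}+\beta_1u^{\gamma-1}$ (valid on all of $(0,1)$, hence on $[\ell,1]$) is the right way to launch the induction despite the statement beginning at $n=2$.

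One small correction: your closing remark about $\gamma\leq\tfrac12$ is not right. For $\gamma<\tfrac12$ one has $\ell^{1-2\gamma}\to 0$ as $\ell\to 0$, not $\ell^{1-2\gamma}\asymp 1$, so a universal constant cannot be absorbed into $c\,\alpha_n\ell^{1-2\gamma}$; and for $\gamma=\tfrac12$ the integral $\int_\ell^s u^{-1}\mathd u$ contributes $\log(1/\ell)$ while $\ell^{1-2\gamma}=1$. In those regimes the term $s^{\gamma-1}\alpha_n\int_\ell^s u^{-2\gamma}\mathd u$ actually produces an $s^{-\gamma}$ contribution (since $s^{\gamma-1}s^{1-2\gamma}=s^{-\gamma}$), so the recursion would take a different form. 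This does not matter here: the lemma is invoked only inside the proof of Proposition~\ref{prop:contact_upper_bound}, which assumes $\gamma>\tfrac12$, and the subsequent lemma already uses $\log(1/\ell)^2<\tfrac13\ell^{1-2\gamma}$, forcing $\gamma>\tfrac12$. So your main argument stands; just drop or qualify the parenthetical about $\gamma\leq\tfrac12$.
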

\begin{proof}
This follows analogously to the proof of \cite[Lemma 2.7]{GraGM2022} by choosing a fixed truncation value $\ell$ instead of an arbitrary truncation sequence.
\end{proof}
The following result gives an explicit upper bound for the sequence $(\alpha_n)_{n\in \mathbb{N}}$.

\begin{lemma}
Let $\ell>0$ be sufficiently small and $(\alpha_n)_{n\in \mathbb{N}}$, $(\beta_n)_{n\in \mathbb{N}}$ the sequences defined by \eqref{eq:alp_bet_def}. Then, it holds\begin{equation}
\alpha_n \leq (2c)^{n-2}c^2\big(\ell^{\frac{1}{2}-\gamma}t_0^{\gamma-1}+t_0^{-\gamma}\big)\ell^{(1-2\gamma)(\frac{n}{2}-1)}\quad \text{for}\ n\geq 2.\label{eq:alp_bound}
\end{equation}
\end{lemma}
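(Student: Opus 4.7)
The plan is to prove \eqref{eq:alp_bound} by induction on $n$, but carried out simultaneously with a companion bound for $\beta_n$, since the recursion \eqref{eq:alp_bet_def} couples the two sequences. Writing $A_n := (2c)^{n-2}c^2\bigl(\ell^{1/2-\gamma}t_0^{\gamma-1}+t_0^{-\gamma}\bigr)\ell^{(1-2\gamma)(n/2-1)}$ for the right-hand side of \eqref{eq:alp_bound}, the strengthened inductive hypothesis I would propagate is
\[
\alpha_n\le A_n\qquad\text{and}\qquad \beta_n\le A_n\,\ell^{1/2-\gamma}.
\]
The factor $\ell^{1/2-\gamma}$ between the two bounds matches the second component of the dominant eigenvector of the $2\times 2$ linear recursion, whose dominant eigenvalue is $\approx c\,\ell^{1/2-\gamma}$; this is what drives the geometric factor $(2c)^{n-2}\ell^{(1-2\gamma)(n/2-1)}$ in the final statement.

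The one analytic ingredient I need is the elementary inequality $\log(1/\ell)\le \ell^{1/2-\gamma}$, which holds for all $\ell$ sufficiently small because $\gamma>1/2$ makes the exponent on the right negative, so $\ell^{1/2-\gamma}\to\infty$ as $\ell\to 0$ while the left-hand side grows only logarithmically. This is what legitimises the hypothesis ``$\ell$ sufficiently small'' in the statement.

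For the base case $n=2$, one directly substitutes $\alpha_1=\kappa t_0^{\gamma-1}$, $\beta_1=\kappa t_0^{-\gamma}$ into \eqref{eq:alp_bet_def}, bounds $\log(1/\ell)$ by $\ell^{1/2-\gamma}$, and absorbs $\kappa$ into $c$ by enlarging $c$ if necessary (so that $\kappa\le c$). This yields $\alpha_2\le c^2(\ell^{1/2-\gamma}t_0^{\gamma-1}+t_0^{-\gamma})=A_2$ and analogously $\beta_2\le A_2\ell^{1/2-\gamma}$. For the inductive step, feeding the hypothesis into \eqref{eq:alp_bet_def} gives
\[
\alpha_{n+1}\le c\bigl(A_n\log(1/\ell)+A_n\ell^{1/2-\gamma}\bigr)\le 2c\,\ell^{1/2-\gamma}A_n=A_{n+1},
\]
and
\[
\beta_{n+1}\le c\bigl(A_n\ell^{1-2\gamma}+A_n\ell^{1/2-\gamma}\log(1/\ell)\bigr)\le 2c\,\ell^{1-2\gamma}A_n=A_{n+1}\,\ell^{1/2-\gamma},
\]
where in each line the inequality $\log(1/\ell)\le\ell^{1/2-\gamma}$ is used to reduce the sum inside the parenthesis to twice its dominant term.

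There is no genuine obstacle here: the argument is bookkeeping for a coupled linear recursion, and the mild smallness condition on $\ell$ is exactly the one already present in the hypothesis of the lemma. The only point that requires some care is choosing the companion bound on $\beta_n$ so that both bounds propagate under the recursion at the same rate; the factor $\ell^{1/2-\gamma}$ between the two is forced by the structure of the matrix $\bigl(\begin{smallmatrix}\log(1/\ell)&1\\ \ell^{1-2\gamma}&\log(1/\ell)\end{smallmatrix}\bigr)$.
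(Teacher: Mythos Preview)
Your argument is correct. The strengthened hypothesis $\alpha_n\le A_n$ and $\beta_n\le A_n\ell^{1/2-\gamma}$ propagates exactly as you compute, using only $\log(1/\ell)\le\ell^{1/2-\gamma}$ for $\ell$ small; and the base case $n=2$ goes through once $c\ge\kappa$, which is harmless since enlarging $c$ in \eqref{eq:alp_bet_def} only increases the sequences and hence preserves the bound \eqref{eq:nu_bound}. (The paper's own base case tacitly uses the same $c\ge\kappa$.)

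Your route differs from the paper's. The paper does \emph{not} carry a companion bound on $\beta_n$; instead it eliminates $\beta_n$ by iterating the recursion twice to obtain the identity
\[
\alpha_{n+2}=c^2\bigl(\log(\tfrac1\ell)^2+\ell^{1-2\gamma}\bigr)\alpha_n+2c^2\log(\tfrac1\ell)\beta_n,
\]
then uses the monotonicity $\alpha_n\vee\beta_n\le\alpha_{n+1}$ together with $\log(1/\ell)^2<\tfrac13\ell^{1-2\gamma}$ to bound the $\beta_n$ term back by $\alpha_n$, yielding the closed two-step inequality $\alpha_{n+2}\le(2c)^2\ell^{1-2\gamma}\alpha_n$. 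This forces two base cases ($n=2$ and $n=3$) and an induction in steps of two. Your eigenvector-style simultaneous induction is more transparent: it handles both parities at once, needs only a single base case, and makes explicit why the growth rate is $(2c)\ell^{1/2-\gamma}$ per step. The paper's approach, on the other hand, produces a self-contained recurrence for $\alpha_n$ alone, which can be convenient if one later wants to iterate without tracking $\beta_n$. Either way the content is the same linear-algebraic fact about the $2\times2$ system.
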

\begin{proof} Let $\ell$ be small enough such that $c^{-2}<\log(\frac{1}{\ell})^2<\frac{1}{3}\ell^{1-2\gamma}$, where $c>0$ is the constant given in Lemma \ref{lem:nu_bound}. Then, it is easy to see that \eqref{eq:alp_bound} holds for $n=2$.
We show that it holds that
\begin{equation}
\alpha_{n+2} \leq (2c)^2\ell^{1-2\gamma}\alpha_n\ \text{for}\ n\geq 2. \label{eq:alp_bound_proof1}
\end{equation}
By the definition $(\alpha_n)_{n\in \N}$ and $(\beta_n)_{n\in \N}$ it holds that
\begin{equation}
\alpha_{n+2} = c^2\big(\log(\tfrac{1}{\ell})^2 + \ell^{1-2\gamma}\big)\alpha_n + 2c^2\log(\tfrac{1}{\ell})\beta_n\quad \text{for}\ n\geq 1.\label{eq:alp_bound_proof2}
\end{equation}
As \eqref{eq:alp_bet_def} also implies that $\alpha_n\vee \beta_n\leq \alpha_{n+1}$ for $n\in \N$ it is easy to see that it also holds that
\begin{equation}
\log(\tfrac{1}{\ell})\beta_n\leq \ell^{1-2\gamma}\alpha_n + \log(\tfrac{1}{\ell})^2\alpha_n\ \text{for}\ n\geq 2.\label{eq:alp_bound_proof3}
\end{equation}
Combining both inequalities \eqref{eq:alp_bound_proof2} and \eqref{eq:alp_bound_proof3} yields \eqref{eq:alp_bound_proof1}. Note that \eqref{eq:alp_bound_proof2} also implies that \eqref{eq:alp_bound} holds for $n=3$. Thus, \eqref{eq:alp_bound} follows directly by induction with \eqref{eq:alp_bound_proof1}.
\end{proof}

Recall that 
\[
T_{\sp} = \frac{\lambda^{2/\gamma}}{\theta^{1/\gamma}}\log(1/\lambda)^{-1/\gamma},\ T_{\st} = \lambda^{2/\gamma}\ \text{and}\ T_{\sigma} = \lambda^{(2-\sigma)/\gamma}.
\]
For $\x,\y\in \R^d\times (0,1)$ with the mark of $\x$ satisfying $t>T_{\sp}$ and the mark of $\y$ satisfying $s>T_{\sigma_0}$, we consider now the geometric random graph $\mathscr{G}$ under the law $\P_{\x,\y}(\cdot\,\vert\, \x\sim\y)$. 
On the event that $\x$ and $\y$ are vertices in $\mathscr{G}$, we declare $\x$ as the root of the graph and set 
\begin{equation}
A := \{\z=(z,u)\in \X:u\leq T_{\st}\}\cup\{\y\} \label{eq:def_setA}
\end{equation}
as the set of vertices in $\mathscr{G}$ with mark smaller than $T_{\st}$ together with $\y$.

\begin{lemma}\label{lem:bound_ordered_tr}
Let $\x=(x,t), \y=(y,s)\in \R^d\times (0,1)$ with $t>T_{\sp}$ and $s>T_{\sigma_0}$. Then, there exists $\varepsilon>0$ not depending on the choice of $\theta$ such that for $A$ given in \eqref{eq:def_setA} it holds 
\[
\E_{\x,\y}\bigg[\sum_{p\in Q_A\cup R_A} (2\lambda)^{\abs{p}}\, \big \vert\, \x\sim\y \bigg] < \lambda^\varepsilon.
\]  
\end{lemma}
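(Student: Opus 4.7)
The plan is to apply Mecke's equation to rewrite the expected sum $\sum_{p\in Q_A\cup R_A}(2\lambda)^{|p|}$ as a deterministic integral over the marks and positions of the intermediate vertices. Using the upper bound in Assumption \ref{ass:main}, each edge probability integrates out in space to yield a mark factor $\kappa(t\wedge s)^{-\gamma}(t\vee s)^{\gamma-1}$ with $\kappa$ proportional to $I_\rho$, so the remaining mark integrals are precisely the quantities $\nu_{T_\st,n}^{t_0}$ from \eqref{eq:non-spatial_mu_def} with truncation $\ell=T_\st$. Lemma \ref{lem:nu_bound} together with the explicit estimate \eqref{eq:alp_bound} on $\alpha_n$ then reduces the problem to summing a geometric series in $\lambda$.

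For $p\in Q_A^n$ I split into two subcases depending on the terminal vertex: either its mark is at most $T_\st$, or it equals $\y$. In the first subcase, integrating $\nu_{T_\st,n}^{t_0}(s)$ over $s\in(0,T_\st)$ eliminates the $\beta_n$ term from \eqref{eq:nu_bound} (since the indicator $\mathbf{1}\{s\geq T_\st\}$ vanishes there) and leaves $\alpha_n\,T_\st^{1-\gamma}/(1-\gamma)$. The dominant contribution comes from $n=1$: with $\alpha_1=\kappa\,t_0^{\gamma-1}$, $t>T_\sp$ and $T_\st=\lambda^{2/\gamma}$, one obtains $\lesssim\lambda\,(\theta\log(1/\lambda))^{(1-\gamma)/\gamma}$, and for $n\geq 2$ the bound \eqref{eq:alp_bound} produces an extra factor $\lambda^{(n-1)(1-\gamma)/\gamma}$, yielding a summable geometric series. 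The subcase where the terminal is $\y$ is analogous with the mark of $\y$ (which exceeds $T_{\sigma_0}$) kept fixed; under the conditioning $\x\sim\y$, the $n=1$ path $\x\to\y$ simply contributes $2\lambda$, and longer paths are controlled by the analogous $\nu$-estimate at $s$ equal to the mark of $\y$.

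For $p\in R_A^n$ I view the path as a self-avoiding walk $\z_0,\ldots,\z_{n-1}$ of length $n-1$ in $\mathscr{G}\setminus A$ followed by an extra return edge from $\z_{n-1}$ to one of the earlier vertices $\z_j$, $j\leq n-2$. Since edges are independent given $\X$ and no $R_A$ path involves $\y$ or the edge $\x\y$, the conditioning on $\x\sim\y$ has no effect here. Applying Mecke, crudely bounding the return edge probability by $1$, and summing over the $n-1$ choices of $j$ gives
\[
\E_{\x,\y}\Big[\sum_{p\in R_A^n}(2\lambda)^{|p|}\,\Big|\, \x\sim\y\Big]\leq (n-1)(2\lambda)^n\int_{T_\st}^1\nu_{T_\st,n-1}^{t_0}(s)\,\mathrm{d}s\leq (n-1)(2\lambda)^n\Big(\tfrac{\alpha_{n-1}}{1-\gamma}+\tfrac{\beta_{n-1}}{\gamma}\Big),
\]
and the sum over $n$ converges geometrically, with leading term of the same order as in the $Q_A$ analysis. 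Combining both contributions yields the claimed bound $\lambda^\varepsilon$ for some $\varepsilon>0$ uniform in $\theta$. The main obstacle is ensuring that $(2\lambda)^n\alpha_n$ decays geometrically in $n$: with $\ell=T_\st=\lambda^{2/\gamma}$ the factor $\ell^{(1-2\gamma)(n/2-1)}$ in \eqref{eq:alp_bound} combines with $(2\lambda)^n$ to produce essentially $\lambda^{n(1-\gamma)/\gamma}$, and the hypothesis $\gamma>1/2$ is precisely what keeps the $t_0^{-\gamma}$ term dominant over the potentially larger $\ell^{1/2-\gamma}t_0^{\gamma-1}$ term in the prefactor of \eqref{eq:alp_bound}, so that the geometric decay survives uniformly in the choice of $\theta$.
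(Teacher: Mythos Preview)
Your overall strategy is the same as the paper's (Mecke plus the $\nu$-bounds of Lemma~\ref{lem:nu_bound} and~\eqref{eq:alp_bound}), and the $\hat Q_A$-part (paths ending at a vertex of mark $\le T_\st$) as well as the $R_A$-part are handled correctly. Your choice of truncation $\ell=T_\st$ is in fact more natural than the paper's $\ell=T_\sp$, and the final geometric summation goes through exactly as you describe.

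There is, however, a genuine gap in your treatment of the paths terminating at the fixed vertex $\y$. First, the quantity $\nu_{\ell,n}^{t_0}(s)$ is \emph{not} an upper bound for $\E_{\x,\y}[\,|\hat Q_\y^n|\,]$: the definition of $\nu$ arises after integrating out $n$ spatial variables, whereas a path from $\x$ to the fixed point $\y$ has only $n-1$ free locations, so one edge factor cannot be absorbed this way. The correct bound (which the paper uses) is obtained by simply dropping one edge, yielding $\E|\hat Q_\y^n|\le \int_\ell^1\nu_{\ell,n-1}^{t_0}(u)\,\mathd u$, which does not involve $s$ at all. Second, and more importantly, for $n=2$ this bound is useless: it gives at most the expected degree of $\x$ restricted to marks above $T_\st$, i.e.\ $O(T_\sp^{-\gamma})=O(\lambda^{-2}\theta\log(1/\lambda))$, so that $(2\lambda)^2\E|\hat Q_\y^2|=O(\theta\log(1/\lambda))$, which does not tend to zero. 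The paper singles out $n=2$ and bounds $|\hat Q_\y^2|$ by the degree of $\y$ instead; since $s>T_{\sigma_0}$ this is at most $C\,T_{\sigma_0}^{-\gamma}=C\lambda^{-(2-\sigma_0)}$, giving $(2\lambda)^2\E|\hat Q_\y^2|\le C\lambda^{\sigma_0}$, which is the missing ingredient. Your sketch handles only $n=1$ separately; you need to isolate $n=2$ as well and exploit the hypothesis $s>T_{\sigma_0}$ there.
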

\begin{proof}
We set for the duration of this proof $\ell=T_{\sp}$.
Let $0<\varepsilon<(\frac{1}{\gamma}-1)\wedge \sigma_0$, which is possible as $\frac{1}{\gamma}>1$. For $n\geq 1$, denote by $\hat{Q}_A^n$ the set of paths in $Q_A^n$ which do not visit $\y$ in their last step and denote by $\hat{Q}_\y^n := Q_A^n\backslash \hat{Q}_A^n$ the set of paths in $Q_A^n$ with $\y$ as its last vertex. Note that by definition the paths in $(Q_A\cup R_A)\backslash\hat{Q}_\y^1$ never consist of the given edge between the vertices $\x$ and $\y$. Thus, the occurence of paths in $\mathscr{G}$ which belong to $(Q_A\cup R_A)\backslash\hat{Q}_\y^1$ is independent of the edge between $\x$ and $\y$ by Assumption \ref{ass:main}. Then, by Mecke's equation and Assumption \ref{ass:main} we have, for $n\geq 2$, that
\begin{align*}
\E_{\x,\y}&\big[\vert\hat{Q}_A^n\vert\, \big\vert\,\x\sim\y\big]\\
& = \int_{\mathbb{R}^d\times (\ell,1]}\mathd \x_1 \cdots \int_{\mathbb{R}^d\times (0,\ell]}\mathd \x_n \prod_{i=1}^n \rho\big(\kappa_2^{-1/\delta}(t_i\wedge t_{i-1})^{1-\gamma}(t_i\vee t_{i-1})^{\gamma}\abs{x_i - x_{i-1}}^d\big),
\end{align*}
where $\x_i = (x_i,t_i)$ for $i=1,\ldots,n$ and $\x = (x_0,t_0)$. Integration over the locations on the right-hand side and using \eqref{eq:alp_bound} yields, for $n\geq 2$, that
\begin{align*}
\E_{\x,\y}\big[\vert\hat{Q}_A^n\vert\, \big\vert\,\x\sim\y\big] &\leq I_\rho\int_0^\ell \mathd t_n \nu_{\ell,n}^{t_0}(t_n) = I_\rho\alpha_n\tfrac{\ell^{1-\gamma}}{1-\gamma}\\
&\leq C^n \lambda^{(1/\gamma-2)n}\lambda^2\big(\lambda^{-1/\gamma}\log(\tfrac{1}{\lambda})^{(1-\gamma)/\gamma}+ \lambda^{-2}\log(\tfrac{1}{\lambda})\big)
\end{align*}
for some constant $C>0$, where we have used in the last step that $t_0> T_{\sp}$ and that $T_{\sp}=\frac{\lambda^{2/\gamma}}{\theta^{1/\gamma}}\log(1/\lambda)^{-1/\gamma}$. As it is easy to see that this bound also holds for the case $n=1$, there exists $C>0$ such that, for $\lambda$ small enough, we have
\[
\sum_{n=1}^\infty (2\lambda)^n \E_{\x,\y}\big[\vert\hat{Q}_A^n\vert \,\big\vert\,\x\sim\y\big] \leq C \lambda^{1/\gamma+1}\big(\lambda^{-1/\gamma}\log(\tfrac{1}{\lambda})^{(1-\gamma)/\gamma}+ \lambda^{-2}\log(\tfrac{1}{\lambda})\big) < 2C\lambda^\varepsilon.
\] 
By \eqref{eq:alp_bet_def} there exists $c>0$, such that $\alpha_n > c(\alpha_{n-1}+\beta_{n-1})$ for $n\geq 2$, and therefore similarly to the previous calculation it holds that
\begin{align*}
\E_{\x,\y}\big[\vert R_A^n\vert\,\big\vert\, \x\sim\y\big]  &\leq n\int_\ell^1 \mathd t_{n-1} \nu_{\ell,n-1}^{t_0}(t_{n-1})= c n \alpha_n\\
&\leq c^n \lambda^{(1/\gamma-2)(n-2)}\big(\lambda^{-1/\gamma}\log(\tfrac{1}{\lambda})^{(1-\gamma)/\gamma}+ \lambda^{-2}\log(\tfrac{1}{\lambda})\big),
\end{align*}
where $c>0$ changes throughout the lines and therefore there exists $C>0$ such that, for $\lambda$ small enough, we have
\[
\sum_{n=3}^\infty (2\lambda)^n \E_{\x,\y}\big[\vert R_A^n\vert\,\big\vert\, \x\sim\y\big] \leq  C \lambda^{1/\gamma+1}\big(\lambda^{-1/\gamma}\log(\tfrac{1}{\lambda})^{(1-\gamma)/\gamma}+ \lambda^{-2}\log(\tfrac{1}{\lambda})\big) < 2C\lambda^\varepsilon.
\]
Note that with the same calculation this bound also holds for $\sum_{n=3}^\infty (2\lambda)^n \E\vert \hat{Q}_{\y}^n\vert$. Thus, it is left to find a bound for $2\lambda \E_{\x,\y}\big[\vert \hat{Q}_\y^1\vert\,\big\vert\, \x\sim\y\big]$ and $(2\lambda)^2 \E_{\x,\y}\big[\vert \hat{Q}_\y^2\vert\,\big\vert\, \x\sim\y\big]$. By definition it directly follows that the first term is smaller than $2\lambda<\lambda^\varepsilon$. For the second term note that $\vert\hat{Q}_\y^2\vert$ is dominated by the number of neighbours of $\y$ which are not $\x$. As the expectation of this number is smaller than $\frac{I_\rho}{(1-\gamma)\gamma} T_{\sigma_0}^{-\gamma}$, we have $(2\lambda)^2 \E_{\x,\y}\big[\vert \hat{Q}_\y^2\vert\,\big\vert\, \x\sim\y\big] \leq C \lambda^{\sigma_0}< C\lambda^\varepsilon$
for some $C>0$. 
\end{proof}

Recall that there exists a constant $C>0$ such that for any vertex $\x = (x,t) \in \R^d\times(0,1)$, its expected degree is smaller than $Ct^{-\gamma}$. We now set $\theta$ in the definition of $T_{\sp}$ as $\theta := \frac{\varepsilon}{8Cc}$, where $\varepsilon>0$ is given by Lemma \ref{lem:bound_ordered_tr} and $c>0$ by Lemma \ref{lem:cite_3}. The following result is then a consequence of Lemma \ref{lem:bound_ordered_tr} and follows with the same argumentation as \cite[Proposition 5.8]{LinMSV2021}. It provides the bound \eqref{eq:non_survival_mid_vertices} in the proof of Proposition \ref{prop:contact_upper_bound}.

\begin{lemma}\label{lem:non_survival_mid_vertices}
Let $\x=(x,t), \y=(y,s)\in \R^d\times (0,1)$ with $t>T_{\sp}$ and $s>T_{\sigma_0}$ and $(\xi^\x_t)_{t\geq 0}$ be the contact process on $\mathscr{G}$ with rate $\lambda$ which only starts in $\x$. Then, there exists $\varepsilon>0$ such that
\[
\P_{\x,\y}(\xi^\x_t\neq \emptyset\ \forall t\geq 0\,\vert\, \x\sim\y) \leq \lambda^\varepsilon
\]
when $\lambda$ is small.
\end{lemma}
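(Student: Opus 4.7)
The plan is to apply Lemma \ref{lem:cite_3} to $(\xi^{\x}_t)_{t\geq 0}$ on the random graph $\mathscr{G}$ with $\x$ as the root and $A$ as in \eqref{eq:def_setA}. Conditional on $\mathscr{G}$ and on the event $\{\x\sim\y\}$, that lemma yields, for any $T>0$,
\[
\P_{\x,\y}\bigl(\xi^{\x}_t\neq \emptyset\ \forall t\geq 0\ \big\vert\ \mathscr{G}\bigr)\leq \frac{\exp(c\lambda^2\deg(\x))}{T} + T\sum_{p\in Q_A\cup R_A}(2\lambda)^{\abs{p}}.
\]
Taking $\P_{\x,\y}(\cdot\mid\x\sim\y)$-expectation and invoking Lemma \ref{lem:bound_ordered_tr} bounds the expectation of the second summand by $T\lambda^{\varepsilon}$, where $\varepsilon>0$ is the exponent produced by that lemma (and, importantly, independent of $\theta$).

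The first summand is handled via a Laplace-transform estimate on $\deg(\x)$. Since under $\P_{\x,\y}(\cdot\mid\x\sim\y)$ the degree of $\x$ is one plus a Poisson random variable with mean bounded by $Ct^{-\gamma}\leq CT_{\sp}^{-\gamma}=C\theta\lambda^{-2}\log(1/\lambda)$, a direct computation gives, for $\lambda$ small,
\[
\E_{\x,\y}\bigl[\exp(c\lambda^2\deg(\x))\mid \x\sim\y\bigr]\leq 2\lambda^{-cC\theta}.
\]
With the choice $\theta = \varepsilon/(8Cc)$ fixed just before the statement, one obtains $cC\theta = \varepsilon/8 < \varepsilon$. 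Balancing the two contributions by choosing $T=\lambda^{-9\varepsilon/16}$ then produces a final upper bound of order $\lambda^{7\varepsilon/16}$, which establishes the lemma after relabelling $\varepsilon$.

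The main technical subtlety, and the place that needs the most care, is the interlocking of constants: the parameter $\theta$ entering $T_{\sp}$ is chosen in terms of $\varepsilon$, while $\varepsilon$ itself comes from Lemma \ref{lem:bound_ordered_tr}, whose hypothesis involves $T_{\sp}$. This would be circular if $\varepsilon$ depended on $\theta$, but Lemma \ref{lem:bound_ordered_tr} is stated with a $\theta$-independent $\varepsilon$, which is precisely what makes the scheme consistent and lets the whole argument go through in the same fashion as \cite[Proposition 5.8]{LinMSV2021}.
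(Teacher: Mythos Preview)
Your proof is correct and follows essentially the same route as the paper: apply Lemma~\ref{lem:cite_3} with root $\x$ and the set $A$ from \eqref{eq:def_setA}, control the path sum via Lemma~\ref{lem:bound_ordered_tr}, and then exploit the choice $\theta=\varepsilon/(8Cc)$ to tame the $\exp(c\lambda^2\deg(\x))$ factor. The only difference is cosmetic: the paper handles the degree term by truncating at $2C\theta\lambda^{-2}\log(1/\lambda)$ and using a Chernoff bound on the Poisson tail, whereas you compute the Laplace transform of the Poisson degree directly; both yield a factor $\lambda^{-O(\varepsilon)}$ that is absorbed by the choice of $T$. Your observation about the non-circularity of the constant interlocking (that the $\varepsilon$ from Lemma~\ref{lem:bound_ordered_tr} is $\theta$-independent) is exactly the point that makes the argument consistent, and the paper relies on it in the same way.
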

\begin{proof}
As seen before the expected degree of a vertex with mark larger than $T_{\sp}$ is smaller than $\frac{C\theta}{\lambda^2}\log(\frac{1}{\lambda})$. Thus, to find an upper bound for the survival probability of $(\xi^\x_t)_{t\geq 0}$ we look at whether the degree of $\x$ is smaller than $\frac{2C\theta}{\lambda^2}\log(\frac{1}{\lambda})$ or not. Then, by Lemma \ref{lem:cite_3} with $T= \lambda^{-\varepsilon/2}$ and Lemma \ref{lem:bound_ordered_tr} it holds that
\begin{align*}
\P_{\x,\y}&(\xi^\x_t\neq \emptyset\ \forall t\geq 0\,\vert\, \x\sim\y)\\
&\leq \P_{\x,\y}\big(\deg(\x)>\frac{2C\theta}{\lambda^2}\log(\tfrac{1}{\lambda})\,\vert\, \x\sim\y\big) + \frac{\exp\big(\frac{\varepsilon}{4}\log(\frac{1}{\lambda})\big)}{T}+T \E_{\x,\y}\big[\sum_{p\in Q_A\cup R_A}(2\lambda)^{\abs{p}}\,\vert\, \x\sim\y\big]\\
&\leq \P_{\x,\y}\big(\deg(\x)>\frac{2C\theta}{\lambda^2}\log(\tfrac{1}{\lambda})\,\vert\, \x\sim\y\big) + \lambda^{\varepsilon/4} + \lambda^{\varepsilon/2}.
\end{align*}
As the number of neighbours of $\x$ different to $\y$ is Poisson distributed with parameter at most $\frac{C\theta}{\lambda^2}\log(\frac{1}{\lambda})$, using a Chernoff bound yields
\[
\P_{\x,\y}\big(\deg(\x)>\frac{2C\theta}{\lambda^2}\log(\tfrac{1}{\lambda})\,\vert\, \x\sim\y\big) \leq \exp\big(-c_1 \frac{\theta}{\lambda^2}\log(\tfrac{1}{\lambda})\big) < \lambda
\]
for $\lambda$ small enough, where $c_1>0$ is some constant. Thus, $\P_{\x,\y}(\xi^\x_t\neq \emptyset\ \forall t\geq 0\,\vert\, \x\sim\y) \leq 3\lambda^{\varepsilon/4}$ which completes the proof.
\end{proof}

As the last step to complete the proof of Proposition \ref{prop:contact_upper_bound} we show inequality \eqref{eq:weak_paths_not_inf}. Recall that, for $\sigma >0$, $E_\sigma$ denotes the event that each infection path of the contact process $(\xi^{\Palm}_t)_{t\geq 0}$ which jumps at first to a vertex with mark larger than $T_\sigma$ is finite and never reaches a vertex with mark smaller than $T_\sigma$.
\begin{lemma}\label{lem:weak_paths_not_inf}
There exists $\varepsilon>0$ and $\sigma>0$, such that
\[
\P_{\Palm}(E_\sigma^c\cap \set{T_0\geq T_\sigma}) \leq \lambda^{2/\gamma-1+\varepsilon}.
\]
\end{lemma}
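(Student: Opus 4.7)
The plan is to follow the strategy of Lemma~\ref{lem:non_survival_mid_vertices}, adapted for the present event. Set $A := \{\z=(z,u)\in\X : u\leq T_\sigma\}$ and observe that on $E_\sigma^c\cap\{T_0\geq T_\sigma\}$ there must exist a neighbour $\x=(x,t)$ of the origin with $t>T_\sigma$ which gets infected by the origin (event $I_\x$) and from which the contact process starting at $\x$ either survives forever or visits a vertex in $A$. Bounding $\1_{E_\sigma^c\cap\{T_0\geq T_\sigma\}}$ by the sum over such $\x$ of the corresponding indicator and taking expectation via Mecke's equation, the upper bound of Assumption~\ref{ass:main} and $\P(I_\x\,\vert\,(0,T_0)\sim\x)\leq \lambda/(\lambda+1)$, I reduce the claim to
\[
\lambda\int_{T_\sigma}^1\!\mathd t_0\int_{T_\sigma}^1\!\mathd t\int_{\R^d}\!\mathd x\,\rho\big(\kappa_2^{-1/\delta}t^\gamma t_0^{1-\gamma}\abs{x}^d\big)\,\big[P_s(\x)+P_A(\x)\big]\leq \lambda^{2/\gamma-1+\varepsilon},
\]
where $P_s(\x)$, $P_A(\x)$ denote the probabilities that the contact process starting at $\x$ survives forever, respectively visits $A$, under $\P_{(0,t_0),\x}(\,\cdot\,\vert\,(0,t_0)\sim\x)$.

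Both probabilities are controlled by the same expected path sum. Lemma~\ref{lem:cite_3} applied with root $\x$ and the set $A$ gives $P_s(\x)\leq \exp(c\lambda^2\deg(\x))/T + T\,\E\sum_{p\in Q_A\cup R_A}(2\lambda)^{\abs{p}}$ for any $T>0$; a Chernoff bound on $\deg(\x)$ as in Lemma~\ref{lem:non_survival_mid_vertices} handles atypically large degrees, and since $t>T_\sigma$ gives $\lambda^2\deg(\x)\lesssim\lambda^\sigma\to 0$ on the typical event, the exponential factor is $O(1)$. For $P_A(\x)$, a union bound via Lemma~\ref{lem:cite_1} over $Q_A\cup R_A$ (each infection path reaching $A$ has a prefix in $Q_A$ if it does not repeat a vertex before reaching $A$, and in $R_A$ otherwise) yields $P_A(\x)\leq \E\sum_{p\in Q_A\cup R_A}(2\lambda)^{\abs{p}}$. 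This expected path sum is then bounded exactly as in Lemma~\ref{lem:bound_ordered_tr}, but with truncation $\ell=T_\sigma$ in place of $T_{\sp}$; Lemma~\ref{lem:nu_bound} and the explicit bound~\eqref{eq:alp_bound} on $\alpha_n$ make the resulting series in $n$ geometric with ratio a positive power of $\lambda$ for $\gamma>\tfrac12$, so it is dominated by its smallest-$n$ terms.

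Substituting these bounds, optimising $T$ and carrying out the iterated integral: the $x$-integral against $\rho$ contributes $(t^\gamma t_0^{1-\gamma})^{-1}$, the $t_0$-integral is $O(1)$, and the $t$-integral contributes at worst a factor $T_\sigma^{1-2\gamma}=\lambda^{(2-\sigma)(1-2\gamma)/\gamma}$ from the divergent piece $\int_{T_\sigma}^1 t^{-2\gamma}\mathd t$. Collecting all powers of $\lambda$ yields a bound of the form $C\lambda^{2/\gamma-1+\varepsilon(\sigma,\gamma)}$ with $\varepsilon(\sigma,\gamma)>0$ for $\sigma>0$ chosen small enough; heuristically, the gain relative to the critical exponent $2/\gamma-1$ comes from the extra factor $\lambda^{\sigma(2\gamma-1)/\gamma}$ that truncation at $\ell=T_\sigma$ produces relative to truncation at $\ell=T_{\sp}$.

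The main obstacle is this final bookkeeping step. With the weaker truncation $\ell=T_\sigma$, the dominant small-$n$ contributions to the $Q_A$- and $R_A$-sums depend on the magnitude of $t\in[T_\sigma,1]$, and the optimal $T$ in Lemma~\ref{lem:cite_3} depends on both $t$ and $\gamma$. One must redo the calculation of Lemma~\ref{lem:bound_ordered_tr} in this new regime, track the $t$-dependence of each small-$n$ term through the $t$-integration, and verify that the resulting exponent of $\lambda$ strictly exceeds $2/\gamma-1$ for a suitable choice of $\sigma>0$ at each $\gamma\in(\tfrac12,1)$.
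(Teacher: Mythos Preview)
Your route is genuinely different from the paper's, and it has a real gap. The paper does \emph{not} pass through a first-neighbour decomposition: it bounds $\P_{\Palm}(E_\sigma^c\cap\{T_0\geq T_\sigma\})$ directly by an ordered-trace count rooted at the origin, invoking \cite[Lemma~5.10]{LinMSV2021}. The key point is that besides $Q_{B_\sigma}$ and $R_{B_\sigma}$ one must add the set $P_0=\{((0,T_0),\x,(0,T_0),\y):\x,\y\sim(0,T_0)\}$ of traces that return to the root at step~$2$; this term contributes $(2\lambda)^3\,\E[\deg(0)^2\1_{T_0>T_\sigma}]\asymp\lambda^{2/\gamma-1+\sigma(2-1/\gamma)}$, which is exactly the bottleneck and is only just small enough.

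Your claim that ``each infection path reaching $A$ has a prefix in $Q_A$ \ldots\ or in $R_A$'' is false: a path $\x\to\y\to\x\to\cdots\to A$ repeats the root at step~$2$, but $R_A^n$ is only defined for $n\geq 3$, so no prefix lies in $Q_A\cup R_A$. The fix is to add a $P_0$-analogue with root $\x$, i.e.\ traces $\x\to\y\to\x\to\z$. However, once you do this and integrate over the extra neighbour $\x$, the contribution becomes $\lambda^4\int_{T_\sigma}^1\!\mathd t_0\int_{T_\sigma}^1\!\mathd t\,(t\wedge t_0)^{-\gamma}(t\vee t_0)^{\gamma-1}t^{-2\gamma}\asymp\lambda^{2/\gamma-2+\sigma(3-1/\gamma)}$, which exceeds $\lambda^{2/\gamma-1}$ unless $\sigma>1/(3-1/\gamma)$ --- incompatible with the smallness of $\sigma$ required elsewhere in the proof of Proposition~\ref{prop:contact_upper_bound}. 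In other words, routing the $P_0$-type term through an intermediate vertex $\x$ costs you exactly one power of $\lambda$ that you cannot afford. The paper's direct approach, keeping the origin as root, is not merely tidier; it is what makes the $P_0$ contribution small enough. There is also a secondary issue with your opening decomposition (the origin may fire several arrows to $\x$ before recovering, and the process $\eta^\x$ launched at the \emph{first} arrow time $\tau_\x$ can die before a later arrow that does lead to $A$), but this is a standard wrinkle that could be patched; the $P_0$ issue is the substantive one.
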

\begin{proof}
We denote by
\[
B_\sigma = \set{\x \in \X: \x\neq \0, t\leq T_\sigma}
\] 
the set of vertices with mark smaller or equal to $T_\sigma$ and by $Q_{B_\sigma}^n$ and $R^n_{B_\sigma}$ the associated sets of paths, which either visit a vertex in $B_\sigma$ in its last step or whose vertices are not in $B_\sigma$ but the last vertex is equal to a previous one. We set 
\[Q_{B_\sigma} = \bigcup_{n\geq 2} Q_{B_\sigma}^n,\quad R_{B_\sigma} = \bigcup_{n\geq 3} R^n_{B_\sigma}
\] and $P_0 = \big\{\big((0,T_0),\x,(0,T_0),\y\big):\x,\y \sim (0,T_0)\big\}$. 
By \cite[Lemma 5.10]{LinMSV2021} the event that no infection path starting in $(0,T_0)$ has an ordered trace in $P_0\cup Q_{B_\sigma}\cup R_{B_\sigma}$ implies $E_\sigma$. In fact, if no infection path starting in $(0,T_0)$ has ordered trace in $P_0\cup Q_{B_\sigma}\cup R_{B_\sigma}$, then each infection path $g:I\to V$ which starts at $(0,T_0)$ and jumps to a vertex $\x=(x,t)$ with $t\geq T_\sigma$ never visits a vertex in $B_\sigma$ and never visits a vertex outside of $B_\sigma$ more than once. Thus, by Lemma \ref{lem:cite_2} any such infection path is finite, as the contact process starting in $(0,T_0)$ and restricted to $B_\sigma^c$ is thin outside $(0,T_0)$ and dies out. Hence, it then holds
\begin{equation}
\begin{split}
\P&_{\Palm}(E_\sigma^c\cap \set{t_0\geq T_\sigma})\\
&\leq \E_{\Palm}\big[1_{\{t_0\geq T_\sigma\}} \sum_{p\in P_0\cup Q_{B_\sigma}\cup R_{B_\sigma}} (2\lambda)^{\abs{p}}\big]\\
&\leq(2\lambda)^3 \E_{\Palm}[\abs{P_0}1_{\{t_0>T_\sigma\}}] + \sum_{n=2}^\infty (2\lambda)^n \E_{\Palm}\abs{Q_{B_\sigma}^n} + \sum_{n=3}^\infty (2\lambda)^n \E_{\Palm}\abs{R_{B_\sigma}^n}.
\end{split} \label{eq:ordered_tr_count2}
\end{equation}
We will proceed to find upper bounds for the expected number of ordered traces corresponding to each of the three classes $P_0, Q_{B_\sigma}$ and $R_{B_\sigma}$. First, it holds by Mecke's equation and integration over the location of the vertices that
\begin{align*}
(2\lambda)^3 &\E_{\Palm}[\abs{P_0}1\{T_0>T_\sigma\}]\\
&\leq (2\lambda)^3 I_\rho^2\int_{T_\sigma}^1\mathd t_0 \int_0^1 \mathd s \int_0^1 \mathd t (t_0\wedge s)^{-\gamma}(t_0\wedge s)^{\gamma-1}(t_0\wedge t)^{-\gamma}(t_0\wedge t)^{\gamma-1}\\
&\leq (2\lambda)^3 (CI_\rho)^2\int_{T_\sigma}^1\mathd t_0 t_0^{-2\gamma}\\
&\leq C^3\lambda^{2/\gamma - 1 + \sigma(2-1/\gamma)} < C^3\lambda^{2/\gamma-1+\varepsilon},
\end{align*}
for some small $\varepsilon>0$ as $2/\gamma - 1 + \sigma(2-1/\gamma)$ is increasing in $\sigma$ as $\gamma>\frac{1}{2}$. The positive constant $C$ does not depend on $\lambda$ and $\sigma$ but may change throughout the lines.

Using Lemma \ref{lem:nu_bound} and \eqref{eq:alp_bound} with $\ell = T_\sigma = \lambda^{2-\sigma}$ as done in the proof of Lemma \ref{lem:bound_ordered_tr} we have by Mecke's equation and Assumption \ref{ass:main}, for $n \geq 2$, that
\begin{align*}
\E_{\Palm}\abs{Q_{B_\sigma}^n} &= \int_0^1 \mathd t_0 \int_0^\ell \mathd t_n \nu_{\ell,n}^{t_0}(t_n)\\
&\leq \frac{1}{1-\gamma}(2c)^{n-2}c^2\ell^{(1-2\gamma)(n/2-1)}\ell^{1-\gamma} \int_0^1 \mathd t_0 (\ell^{1/2-\gamma}t_0^{1-\gamma} + t_0^{-\gamma})\\
&\leq C^n \lambda^{(1/\gamma-2)(n/2-1)(2-\sigma)}\lambda^{(1/\gamma-1)(2-\sigma)}\lambda^{(1/(2\gamma)-1)(2-\sigma)},
\end{align*}
for some positive constant $C>0$. Then, it follows that
\begin{align*}
\sum_{n=2}^\infty (2\lambda)^n \E_{\Palm}\abs{Q_{B_\sigma}^n} &\leq C^2 \lambda^{3/\gamma-1}\lambda^{(2-1/\gamma -1/(2\gamma))\sigma} < C^2\lambda^{2/\gamma - 1 + \varepsilon}
\end{align*}
for $\sigma>0$ sufficiently small. For the last summand Lemma \ref{lem:nu_bound} and \eqref{eq:alp_bound} yield similarly that
\begin{align*}
\E_{\Palm}\abs{R_{B_\sigma}^n}&\leq n \int_0^1 \mathd t_0 \int_\ell^1\mathd t_{n-1} \nu_{\ell,n-1}^{t_0}(t_{n-1})\\
&\leq \frac{n}{c(1-\gamma)\gamma} \int_0^1 \mathd t_0 \alpha_n\\
&\leq \frac{n}{(1-\gamma)\gamma} (2c)^{n-2}c \ell^{(1-2\gamma)(n/2-1)} \int_0^1 \mathd t_0 (\ell^{1/2-\gamma}t_0^{1-\gamma} + t_0^{-\gamma}).
\end{align*}
and we have therefore
\begin{align*}
\sum_{n=3}^\infty (2\lambda)^n \E_{\Palm}\abs{R_{B_\sigma}^n} &\leq \sum_{n=3}^\infty C^n \lambda^{(1/\gamma-1)n} \lambda^{2-1/\gamma} \lambda^{(2-1/\gamma)(n/2-1/2)\sigma)}\\
&\leq C^3 \lambda^{2/\gamma-1}\lambda^{(2-1/\gamma)\sigma}<\lambda^{2/\gamma-1+\varepsilon},
\end{align*}
since $2/\gamma-1+\sigma(2-1/\gamma)$ is again increasing in $\sigma$.
As all three summands of the righthand side of \ref{eq:ordered_tr_count2} are bounded by $C\lambda^{2/\gamma-1+\varepsilon}$ for some constants $C>0$ and $\varepsilon>0$ sufficiently small this completes the proof.
\end{proof}

\section{Exponential extinction time on finite restrictions}\label{sec:exp_ext_time}
In this section we consider the graph sequence $(\mathscr{G}_n)_{n\in \N}$, where $\mathscr{G}_n$ is the spatial restriction of $\mathscr{G}$ on $[-\frac{n^{1/d}}{2},\frac{n^{1/d}}{2}]^d$. As this is a sequence of finite graphs, the contact process with any potential initial condition will almost surely die out for all $n\in \N$. Thus, the more natural question is to estimate the time the infection survives on these graphs when the infection starts with the best possible initial condition, i.e. when the graph is fully infected. We denote by $\tau_{n} := \inf\{t>0:\xi_t^{\mathscr{G}_n}=\emptyset\}$ the extinction time of the contact process on $\mathscr{G}_n$. The main result of this section shows that for any choice of $\lambda>0$ the extinction time is at least of exponential order in the number of vertices of $\mathscr{G}_n$ with high probability as $n$ becomes large.

\begin{theorem}\label{thm:exp_extinction_time}
Let $(\mathscr{G}_n)_{n\in\N}$ be the restricted finite graph sequence of a general geometric random graph which satisfies Assumption \ref{ass:main} for $\gamma>\frac{\delta}{\delta+1}$. For any $\lambda>0$, there exists $c>0$ such that
\[
\lim_{n\to \infty} \P\{\tau_{n} \geq e^{cn}\} = 1.
\]
\end{theorem}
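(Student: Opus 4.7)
The plan is to prove Theorem \ref{thm:exp_extinction_time} by a standard block argument, comparing the contact process on $\mathscr{G}_n$ to a supercritical $M$-dependent oriented site percolation on a renormalised sub-lattice of $\mathbb{Z}^d \times \mathbb{Z}_{\geq 0}$. Fix $\lambda > 0$ so that the threshold $T_{\sp}$ and the star size $r = \beta \log(1/\lambda)\lambda^{-2}$ from Section \ref{subsec:lowerbound} are constants depending only on $\lambda$. In order to obtain finite-range dependence, first truncate the graph by keeping only edges of Euclidean length at most some large constant $L = L(\lambda)$; call the resulting graph $\mathscr{G}_n^L$. Since the contact process is monotone in the graph, any lower bound on the extinction time of $\mathscr{G}_n^L$ transfers to $\tau_n$. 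Then partition the box $[-n^{1/d}/2, n^{1/d}/2]^d$ into sub-cubes $(Q_i)_{i \in I_n}$ of side length $K > 2L$, giving $|I_n| \asymp n/K^d$.

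Second, declare a block $Q_i$ \emph{good} if it contains a vertex $\x_i$ with mark smaller than $T_{\sp}$ that has at least $r$ neighbours in $Q_i$ together with its adjacent blocks. By essentially the same computation as in the proof of Lemma \ref{lem:chainofstars}, specialised to a region of constant volume $K^d$, the probability that $Q_i$ is good is at least some $p_0 > 0$ independent of $n$. Moreover, Lemma \ref{lem:twoconn} (or a mild variant adapted to the bounded-distance setting) shows that, conditional on $Q_i$ and an adjacent block $Q_j$ both being good, the powerful vertices $\x_i, \x_j$ are joined through a common connector vertex with probability at least $q_0 > 0$. Combining the constant-time survival of the contact process on a star of size $r$, given by \cite[Lemma 2.4]{LinMSV2021}, with such connector transmissions shows that within a constant time step $h > 0$ the infection can cross from $\x_i$ to $\x_j$ with probability at least $q_1 > 0$.

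Third, formalise this into a renormalisation by declaring the site $(i, k) \in I_n \times \mathbb{Z}_{\geq 0}$ to be \emph{wet} if $Q_i$ is good and the transmission event above holds during the time interval $[k h, (k+1) h]$. Because of the edge-length truncation, each wet-status depends only on the Poisson process and graphical construction within a bounded spatial neighbourhood of $Q_i$, so the resulting field of wet sites is $M$-dependent for some constant $M = M(K, L)$. For $K$ chosen sufficiently large relative to $L$, the marginal wet-probability can be made arbitrarily close to $1$, and the Liggett--Schonmann--Stacey domination theorem then compares the field to supercritical independent Bernoulli percolation. Standard finite-box estimates for supercritical oriented percolation on $|I_n| \asymp n$ sites yield that the wet cluster, and therefore the contact process on $\mathscr{G}_n^L$, survives for a time of order $e^{cn}$ with probability tending to $1$.

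The main obstacle I anticipate is verifying that the edge-length truncation does not destroy the local survival mechanism: I must choose $L$ so that, conditional on the good-block event, both the size-$r$ star around $\x_i$ and the connectors to $\x_j$ in neighbouring good blocks lie within Euclidean distance $L$ of these vertices with high probability. Since all the relevant length scales ($T_{\sp}^{-\gamma/d}$ and the radii used in Lemmas \ref{lem:chainofstars} and \ref{lem:twoconn}) are constants depending only on $\lambda$, this is feasible but requires careful bookkeeping in the spirit of Section \ref{subsec:lowerbound}.
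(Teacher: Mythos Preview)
Your renormalisation strategy is a genuinely different route from the paper's, but as written it contains a real gap at the step where you claim the marginal wet-probability can be pushed arbitrarily close to $1$ by taking $K$ large. Two things go wrong simultaneously. First, with the ordering $K>2L$ a two-edge connector path in the truncated graph $\mathscr{G}_n^L$ has total length at most $2L<K$, so it cannot join a powerful vertex $\x_i$ near the centre of $Q_i$ to $\x_j$ in an adjacent block; your connector event is simply empty unless you additionally force $\x_i,\x_j$ to lie near the shared face, which you do not do. Second, and more fundamentally, even without the truncation the connector count from Lemma~\ref{lem:twoconn} between two vertices of mark $\approx T_{\sp}$ at distance $\approx K$ has Poisson parameter of order $T_{\sp}^{-\gamma(1+\delta)}K^{-d\delta}$, which tends to $0$ as $K\to\infty$ because $T_{\sp}$ is held fixed. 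So $q_0\to 0$ and the wet-probability stays bounded away from $1$; Liggett--Schonmann--Stacey does not apply.

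The paper avoids exactly this tension by a multi-scale construction rather than a single block scale: Proposition~\ref{pro:clusterofstars} builds a hierarchy of boxes of side $2^k$ for $k=0,\dots,k_p\asymp\log n$, with the mark threshold at layer $k$ taken as $e^{-k\theta d}$ so that at every layer the box size and the mark are matched and the connector parameter stays large. The nested boxes form $2^d$-ary trees, and a percolation argument down these trees gives of order $n$ good stars in a single connected subgraph; the contact-process conclusion then comes from the black-box results of \cite{MouMVY2016}. Your single-scale scheme could be repaired by letting the mark threshold depend on $K$, say $t_K=K^{-d\alpha}$ with $\alpha\in\big(\tfrac{\delta}{\gamma(\delta+1)},1\big)$ (this interval is non-empty precisely when $\gamma>\tfrac{\delta}{\delta+1}$), and by taking $L$ of order $K$ rather than $K>2L$; but this is a substantive change, not the bookkeeping you anticipate.
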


\begin{remark}
Note that the result of Theorem \ref{thm:exp_extinction_time} also holds when we consider a graph sequence $(\mathscr{G}_n)_{n\in\N}$, where each graph $\mathscr{G}_n$ is defined on a Poisson process of unit intensity on the torus $\mathbb{T}_n$ with volume $n$ and satisfies Assumption \ref{ass:main}, where the Euclidean metric is replaced by the torus metric.
\end{remark}

As seen in the proof of Proposition \ref{prop:contact_lower_bound} the infection survives well on the neighbourhood of sufficiently powerful vertices, the so called stars. Our main contribution is to show that there exists a connected subgraph in $\mathscr{G}_n$ which contains of order $n$ stars. Then, with similar arguments as done in \cite{MouMVY2016}, it can be shown that the infection survives on this subgraph at least for a time of exponential order in $n$.

\begin{prop}\label{pro:clusterofstars}
Let $S>0$ be given and $(\mathscr{G}_n)_{n\in\N}$ the restricted finite graph sequence of a general geometric random graph which satisfies Assumption \ref{ass:main} for $\gamma>\frac{\delta}{\delta+1}$. Then, there exists $b>0$ and $\varepsilon>0$ such that, for $n$ sufficiently large, the probability that $\mathscr{G}_n$ has a connected subgraph containing $b\cdot n$ disjoint stars of at least $S$ vertices each is larger than $1-\exp(-n^\varepsilon)$.
\end{prop}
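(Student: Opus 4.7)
The plan is to produce a connected cluster of $\Omega(n)$ disjoint small stars inside $\mathscr{G}_n$ by renormalizing onto a coarse-grained lattice and then invoking supercritical percolation estimates.

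First, tile the box $[-\tfrac{n^{1/d}}{2},\tfrac{n^{1/d}}{2}]^d$ into $N = n/m^d$ axis-aligned sub-boxes $(B_i)_i$ of side length $m$, where $m$ is a large constant to be fixed. Fix a constant mark threshold $t^*>0$ small enough that, by the lower bound in Assumption \ref{ass:main}, the expected number of neighbours inside $B_i$ of a vertex with mark at most $t^*$ and location in $B_i$ exceeds $2S$. Declare $B_i$ \emph{good} if it contains at least one vertex $\mathbf{x}^*_i$ with mark $\leq t^*$ having at least $S$ neighbours inside $B_i$. Since this event depends only on the Poisson process restricted to $B_i$ and on the (independent) edges among the points of $B_i$, the good events are independent across the $B_i$. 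A Poisson/Chernoff computation then yields $\P(B_i \text{ good}) \geq 1-\eta$ for any preassigned $\eta>0$, provided $m$ is large and $t^*$ small.

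Next, for adjacent good sub-boxes $B_i, B_j$, the centres $\mathbf{x}^*_i$ and $\mathbf{x}^*_j$ have marks $\leq t^*$ and Euclidean distance at most $2\sqrt{d}\,m$. By Assumption \ref{ass:main} the edge $\mathbf{x}^*_i \sim \mathbf{x}^*_j$ is present with conditional probability at least some $q_0>0$; declare the coarse-grained bond $\{i,j\}$ open in this case. Identifying $(B_i)$ with a finite box in $\mathbb{Z}^d$, the resulting site-bond process is finite-range dependent (each bond only involves two neighbouring boxes), so by a Liggett--Schonmann--Stacey style domination it stochastically dominates a Bernoulli site-bond percolation whose parameters are arbitrarily close to $1$, obtained by choosing $\eta$ small and $m$ large. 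For $d\geq 2$ this dominating percolation is supercritical.

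The final ingredient is the classical stretched-exponential lower-tail bound for supercritical percolation: in a box of $N$ sites, the largest open cluster has size at least $\rho N$ with probability $\geq 1-\exp(-c N^{(d-1)/d})$. Applying this with $N = n/m^d$ produces a connected subgraph of $\mathscr{G}_n$ consisting of $\geq \rho N$ centres joined through open coarse-grained bonds, each carrying a disjoint star with $\geq S$ vertices inside its own sub-box (disjointness is automatic since each star lies in a distinct $B_i$). Taking $b=\rho/m^d$ and $\varepsilon = (d-1)/d$ yields the required bound.

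The main obstacle is the case $d=1$, where $\mathbb{Z}$ has no supercritical nearest-neighbour bond percolation. This must be treated separately, exploiting the heavy-tailed long edges implied by $\delta>1$ and $\gamma>\tfrac{\delta}{\delta+1}$ to produce, after a coarser renormalization, a supercritical long-range percolation on $\mathbb{Z}$ with the same cluster-size lower tail. A further subtlety is that coarse-grained bonds sharing an endpoint box are correlated through their shared centre, but this dependence has range one and is absorbed in the Liggett--Schonmann--Stacey bound above.
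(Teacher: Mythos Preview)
There is a genuine gap in the bond step. Under Assumption~\ref{ass:main} the lower bound on a single edge probability is $\alpha\,(1\wedge\kappa_1(t\wedge s)^{-\delta\gamma}|x-y|^{-\delta d})$, which is capped at $\alpha$ no matter how small you take $t^*$ or how you tune $m$. Since $\alpha$ is a fixed model constant that may be arbitrarily small, the probability that the single edge $\mathbf{x}_i^*\sim\mathbf{x}_j^*$ is present cannot be pushed close to~$1$; you only get $q_0\ge\alpha$. Consequently your claim that Liggett--Schonmann--Stacey yields domination by a Bernoulli site--bond percolation with parameters ``arbitrarily close to~$1$'' is false: LSS can remove finite-range dependence, but it cannot inflate marginals, and for small $\alpha$ your coarse-grained process may well be subcritical in every dimension. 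This is precisely why the paper does \emph{not} rely on direct edges between powerful vertices: it connects the centres $\mathbf{x}_{k,\mathbf{v}}$ of neighbouring boxes by two-step paths through \emph{connectors} (vertices with mark near~$1$), and shows via Lemma~\ref{lem:twoconn} that the number of such connectors is Poisson with a parameter that can be made arbitrarily large, so the two-step connection probability tends to~$1$. Your argument would need the same amplification device; a single direct edge attempt is not enough.

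A second point: you flag $d=1$ as an obstacle but do not actually resolve it, and your suggested fix via long-range percolation would require a separate (nontrivial) argument. The paper's construction sidesteps this entirely by building a hierarchical $2^d$-ary tree of nested boxes across $k_p\asymp\log n$ mark layers, linking each box to its parent through a connector. This produces a connected tree of $\Theta(n)$ good boxes without ever invoking lattice percolation, and works uniformly for all $d\ge1$. So even after repairing the bond probability, your single-scale renormalization remains dimension-restricted, whereas the paper's multiscale tree handles the one-dimensional case for free.
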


\begin{proof}
We fix $0<a<\frac{1}{\log 2}$ and choose $\varepsilon_1>0$ small enough that $\log 2 > \frac{\varepsilon_1+\log 2}{\gamma+\gamma/\delta}$, which is possible since $\gamma>\frac{\delta}{\delta+1}$. 
Similarly to the proof of Lemma \ref{lem:chainofstars}, the vertices with mark smaller than $\frac{1}{2}$ will represent the potential midpoints of the stars of the subgraph, whereas the vertices with larger mark represent potential neighbours and connectors of the midpoints.
For this proof it is not sufficient to use the arguments of Lemma \ref{lem:chainofstars} to show that a line of stars exists in $\mathscr{G}_n$, as such a subgraph would only consist of order $\log(n)$ many stars. Hence, we need to to break up the powerful vertices of $\mathscr{G}_n$ with mark smaller than $\frac{1}{2}$ more carefully into a system of boxes such that each of these boxes contains a midpoint of one potential star. Let $n_p = \gaus{n^{(1-a\log 2)/d}}$ and $k_p = \gaus{(a\log n)/d}$. For $k=0,\ldots k_p$, we define 
\[
V_k := \set{0,\ldots,n_p2^{k_p-k}-1}^d.
\]
and
\[
A_{k,\v} := \bigtimes_{i=1}^d \big(2^k v_i,2^k (v_i+1)\big) \quad \text{for}\ k=0,\ldots ,k_p\ \text{and}\ \v = (v_1,\ldots,v_d) \in V_k.
\]
For each $k= 0,\ldots,k_p$, the cubes $\{A_{k,\v}:\v\in V_k\}$ give a tessellation of $[0,\frac{n^{1/d}}{2}]^d$ into $(n_p2^{k_p-k})^d$ cubes of volume $2^{kd}$ such that the finest tessellation is given for $k=0$ and the coarsest for $k=k_p$. Furthermore, the cubes are nested in each other in the sense that for each cube $A_{k+1,\v}$ the cubes $\{A_{k,2\v+\e}:\e\in \{0,1\}^d\}$ are a tessellation of $A_{k+1,\v}$.

Set $\theta>0$ such that $\log 2>\theta>\frac{\varepsilon_1+\log 2}{\gamma+\gamma/\delta}$ and define 
\[
B_{k,\v} := A_{k,\v}\times (\tfrac{1}{2}e^{-(k+1)\theta d},\tfrac{1}{2}e^{-k\theta d}) \quad \text{for}\ k=0,\ldots k_p\ \text{and}\ \v = (v_1,\ldots,v_d) \in V_k.
\]
We denote with the parameter $k=0,\ldots k_p$ the layer of the boxes $\{B_{k,\v}: \v\in \V_k\}$ which defines the range of the marks of points of $\X$ inside the boxes and the level of coarsness of the tessellation of the space. Thus, large values of $k$ imply more powerful vertices and a coarser set of boxes to separate them. As an example, the boxes of the most powerful layer $k_p$ have width of order $n^{a\log 2}$ and the marks of the vertices therein are of order $n^{-a \theta\log 2}$. 
As we have already seen that the cubes $\{A_{k,\v}:k=0,\ldots,k_p, \v\in V_k\}$ are nested in each other, the system of boxes $\{B_{k,\v}:k=0,\ldots,k_p, \v\in V_k\}$ can be made to have a tree structure by treating $B_{k+1,\v}$ as the parent of each box $B_{k,2\v+\e}$, for $\e\in \{0,1\}^d$, see Figure \ref{fig:box_nested}. This leads to $n_p^d$ distinct $2d$-regular trees with roots $\{B_{k_p,\v}:\v\in V_{k_p}\}$.

\begin{figure}[h]
\begin{center}
\begin{tikzpicture}[scale=0.4, every node/.style={scale=0.6}]
\draw (0,4.5)  -- (0,4.5) node[left] {$\frac{1}{2}$} --  (0, -3) node[below left] {$0$};
\draw (0,-3)  -- (24.5,-3);
\node at (4,-1) {$\vdots$}; 
\node at (12,-1) {$\vdots$}; 
\node at (20,-1) {$\vdots$}; 
\draw (0,-0.5) rectangle ++(8,1); \node at (4,-0.1) {$B_{3,1}$};
\draw (8,-0.5) rectangle ++(8,1); \node at (12,-0.1) {$B_{3,2}$};
\draw (16,-0.5) rectangle ++(8,1); \node at (20,-0.1) {$B_{3,3}$};
\draw (0,0.5) rectangle ++(4,1.5); \node at (2,1) {$B_{2,1}$};
\draw (4,0.5) rectangle ++(4,1.5); \node at (6,1) {$B_{2,2}$};
\draw (8,0.5) rectangle ++(4,1.5); \node at (10,1) {$B_{2,3}$};
\draw (12,0.5) rectangle ++(4,1.5); \node at (14,1) {$B_{2,4}$};
\draw (16,0.5) rectangle ++(4,1.5); \node at (18,1) {$B_{2,5}$};
\draw (20,0.5) rectangle ++(4,1.5); \node at (22,1) {$B_{2,6}$};
\draw (0,2) rectangle ++(2,2.5); \node at (1,3) {$B_{1,1}$};
\draw (2,2) rectangle ++(2,2.5); \node at (3,3) {$B_{1,2}$};
\draw (4,2) rectangle ++(2,2.5); \node at (5,3) {$B_{1,3}$};
\draw (6,2) rectangle ++(2,2.5); \node at (7,3) {$B_{1,4}$};
\draw (8,2) rectangle ++(2,2.5); \node at (9,3) {$B_{1,5}$};
\draw (10,2) rectangle ++(2,2.5); \node at (11,3) {$B_{1,6}$};
\draw (12,2) rectangle ++(2,2.5); \node at (13,3) {$B_{1,7}$};
\draw (14,2) rectangle ++(2,2.5); \node at (15,3) {$B_{1,8}$};
\draw (16,2) rectangle ++(2,2.5); \node at (17,3) {$B_{1,9}$};
\draw (18,2) rectangle ++(2,2.5); \node at (19,3) {$B_{1,10}$};
\draw (20,2) rectangle ++(2,2.5); \node at (21,3) {$B_{1,11}$};
\draw (22,2) rectangle ++(2,2.5); \node at (23,3) {$B_{1,12}$};
\end{tikzpicture}
\caption{Sketch of the structure of the boxes $B_{k,\v}$ in dimension one. The y-axis represents the mark of the vertices and the x-axis the location.}\label{fig:box_nested}
\end{center}
\end{figure}
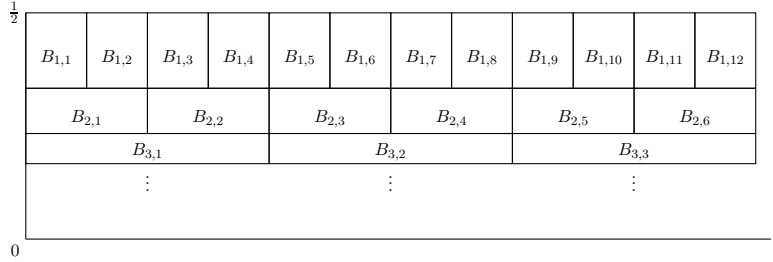

Note that the amount of vertices in a box $B_{k,\v}$ is Poisson-distributed with parameter 
\[
2^{kd}\big(\tfrac{1}{2}e^{-k\theta d}-\tfrac{1}{2}e^{-(k+1)\theta d}\big)> ce^{kd(\log 2 - \theta)}
\]
for some constant $c>0$ not depending on $k$ and $\v$. Thus, for $\varepsilon_2 = \log 2 - \theta>0$ it holds that
\begin{equation}
\P\{B_{k,\v}\ \text{is non-empty}\}\geq 1 - \exp\big(ce^{kd\varepsilon_2}\big). \label{eq:box-non-empty}
\end{equation}
On the event that $B_{k,\v}$ is non-empty we denote by $\x_{k,\v}$ the vertex with the smallest mark in the box.

As mentioned above each of these boxes corresponds to one potential midpoint of the stars. For each such midpoint, i.e. for each such box we need a distinct set of potential neighbours and connectors. To this end, we colour the vertices with mark larger than $\frac{1}{2}$. Choose $0<\varepsilon_3 < \theta\gamma \wedge \delta \varepsilon_1$ such that $\sum_{k=0}^\infty e^{-kd(\theta\gamma\wedge \delta\varepsilon_1 - \varepsilon_3)}$ converges and color the points of $\X$ on $\R^d\times[\frac{1}{2},1)$ by the colour set $\N$ independently such that the points with color $k\in \N$ form a Poisson point process $\X_k$ on $\R^d\times[\frac{1}{2},1)$ with an intensity proportional to $e^{-kd(\theta\gamma\wedge \delta\varepsilon_1 - \varepsilon_3)}$.
For $k=0,\ldots,k_p$ and $\v\in V_k$, we denote by
\begin{itemize}
\item $N_{k,\v}=\X_k\cap A_{k,\v}\times [\frac{1}{2},\frac{3}{4})$ the potential neighbours of $\x_{k,\v}$, if $B_{k,\v}$ is non-empty and by
\item $C_{k,\v}=\X_k\cap A_{k,\v}\times [\frac{3}{4},1)$ the potential connectors, which we will use to connect $\x_{k,\v}$ to other midpoints.
\end{itemize} 
Note that the intensity of these Poisson point processes is decreasing in $k$, as it is easier for vertices with smaller mark to find sufficiently many neighbours and connectors in the corresponding boxes, so we require fewer candidates to succeed.

Since $\theta\gamma<\log 2$, there exists $c>0$ such that, for all $k=0,\ldots,k_p$, on the event that $B_{k,\v}$ is non-empty it holds $ct_{k,\v}^{-\gamma/d} <2^{k}$, where $t_{k,\v}$ is the mark of $\x_{k,\v}$. Thus for each $k=0,\ldots,k_p$, the volume of $B(\x_{k,\v},ct_{k,\v}^{-\gamma/d}) \cap A_{k,\v}$ is a positive proportion $\rho>\frac{1}{2^d}$ of the volume of the ball itself. As in the proof of Lemma \ref{lem:chainofstars} this leads to two observations.
First, given $B_{k,\v}$ is non-empty, by the same arguments as in the proof of Lemma \ref{lem:twoconn} the number of neighbours of $x_{k,\v}$ in $N_{k,\v}$ is Poisson-distributed with parameter larger than $ce^{-kd(\theta\gamma\wedge \delta\varepsilon_1-\varepsilon_3)}e^{kd\theta\gamma} > ce^{kd\varepsilon_3}$ for some constant $c>0$ not depending on $k$. We denote by $\St (k,\v)$ the event that $B_{k,\v}$ is non-empty and $\x_{k,\v}$ has at least $S$ neighbours in $N_{k,\v}$. Then, by a Chernoff bound there exists $c>0$ and $k_0$ sufficiently large and not depending on $n$ such that for all $k\geq k_0$ and $\v\in V_k$ it holds
\begin{equation}
\P\big(\St (k,\v)\,\vert\, B_{k,\v}\ \text{is non-empty}\big) \geq 1 - \exp(-ce^{kd\varepsilon_3}). \label{eq:box-star}
\end{equation}

Second, given the box $B_{k+1,\v}$ and one of its children $B_{k,2\v+\e}$ are non-empty, note that $\x_{k+1,\v}$ and $\x_{k,2\v+\e}$ have distance at most $\sqrt{d}2^{k+1}$ and both have marks smaller $\frac{1}{2}e^{-k\theta d}$. Thus, by the same argument as used in the proof of Lemma 2.3, there exists a constant $c>0$ such that the number of vertices in $C_{k,2\v+\e}$ which form an edge to both $\x_{k+1,\v}$ and $\x_{k,2\v+\e}$ is Poisson-distributed with parameter larger than
\begin{align*}
ce^{-kd(\theta\gamma\wedge \delta\varepsilon_1-\varepsilon_3)}t_{k,2\v+\e}^{-\gamma}\big(1\wedge t_{k+1,\v}^{-\gamma\delta}(\abs{x_{k,2\v+\e}-x_{k+1,\v}}+t_{k,2\v+\e}^{\gamma/d})^{-d\delta}\big) > ce^{kd\varepsilon_3}
\end{align*}
where the constant $c>0$ changes through the steps but does not depend on $k$ and $\v$ and we have used that $\theta>\frac{\varepsilon_1+\log 2}{\gamma+\gamma/\delta}$. Denote by $\x_{k+1,\v} \overset{2}{\conn} \x_{k,2\v+\e}$ the event that the boxes $B_{k+1,\v}$ and $B_{k,2\v+\e}$ are non-empty and the vertices $\x_{k+1,\v}$ and $\x_{k,2\v+\e}$ are connected via a vertex in $C_{k,2\v+\e}$. Then, we have for all $k\in \N$ and $\v\in V_k$ that
\begin{equation}
\P(\x_{k+1,\v} \overset{2}{\conn} \x_{k,2\v+\e}\,\vert\,  B_{k+1,\v}\ \text{and}\ B_{k,2\v+\e}\ \text{are non-empty}) \geq 1 - \exp(-ce^{kd\varepsilon_3}).\label{eq:box-twoconn}
\end{equation}

With the structure of the boxes and the bounds \eqref{eq:box-non-empty}-\eqref{eq:box-twoconn} at hand we will show that there exists a connected subgraph containing of order $n$ distinct stars with at least $S$ vertices each. This will be done in two steps. First, we show that the vertices in the most powerful layer $k_p$ form a connected subgraph containing $n_p^d$ distinct stars, where each box $B_{k_p,\v}$ contains one of the midpoints of these stars. Second, recall that each box $B_{k_p,\v}$ for $\v\in V_{k_p}$ represents the root of a $2^d$-regular tree. We will show that the trees resulting only from boxes contributing a star to the connected subgraph are percolated $2^d$-regular trees with a depth of order $k_p$ containing of order $2^{k_p d}$ distinct stars. As there are $n_p^d$ many trees like this, this will lead to a connected subgraph of $\mathscr{G}_n$ with of order $n$ distinct stars.
  
To simplify notation we redefine the labeling of the boxes of layer $k_p$. Let 
\[
\sigma: \{0,\ldots,n_p^d-1\}\to V_k
\]
be a bijection such that $B_{k_p,\sigma(0)}=B_{k_p,\0}$ and the boxes $B_{k_p,\sigma(i)},\ B_{k_p,\sigma(i+1)}$ are adjacent to each other for $i=0,\ldots,n_p^d-2$. In the same way we relabel the vertices with the smallest mark in a box, i.e. on the event that $B_{k_p,\sigma(i)}$ is non-empty we denote by $\x_{k_p,\sigma(i)}$ the vertex with the smallest mark in that box. We say $B_{k_p,\sigma(0)}$ is good if and only if the box is non-empty and the vertex $\x_{k_p,\sigma(0)}$ has at least $S$ neighbours in $N_{k_p,\sigma(0)}$. By \eqref{eq:box-non-empty} and \eqref{eq:box-star}, there exists $c>0$ such that 
\[
\P(B_{k_p,\sigma(0)}\ \text{is good}) \geq \big(1- \exp(-ce^{k_p d\varepsilon_2})\big)\big(1- \exp(-ce^{k_p d\varepsilon_3})\big).
\]
For $i=0,\ldots,n_p^d-2$, we say $B_{k_p,\sigma(i+1)}$ is good if
\begin{enumerate}[(i)]
\item $B_{k_p,\sigma(i)}$ is good,
\item $B_{k_p,\sigma(i+1)}$ is non-empty,
\item $\x_{k_p,\sigma(i+1)}$ has at least $S$ neighbours in $N_{k_p,\sigma(i+1)}$ and
\item $\x_{k_p,\sigma(i+1)}$ and $\x_{k_p,\sigma(i)}$ are connected via a connector in $C_{k_p,\sigma(i+1)}$
\end{enumerate}
and otherwise bad. Let $\varepsilon_4< \varepsilon_2\wedge \varepsilon_3$. Then, by \eqref{eq:box-non-empty}-\eqref{eq:box-twoconn}, there exists $c>0$ such that, for $i=0,\ldots,n_p^d-2$, it holds
\[
\P(B_{k_p,\sigma(i+1)}\ \text{is good}\,\vert\, B_{k_p,\sigma(i)}\ \text{is good}) \geq \big(1- \exp(-ce^{kd\varepsilon_4})\big).
\]
Thus we can deduce that
\begin{equation}
\P(B_{k_p,\v}\ \text{is good for all}\ \v \in V_{k_p})\geq 1-n_0^d\exp(-ce^{k_pd\varepsilon_4}) \geq 1-n^{1-a\log 2}\exp(-cn^{a\varepsilon_4}).\label{eq:goodpowerfullayer}
\end{equation}

We continue the definition of good boxes on the other layers. For $\v\in \mathbb{\N^d}$ denote by $\gaus{\frac{\v}{2}}$ the vector $(\gaus{\frac{v_1}{2}},\ldots,\gaus{\frac{v_d}{2}})$. Then, for each $k=0,\ldots,k_p-1$ and $\v\in V_k$ the parent box of $B_{k,\v}$ is given by $B_{k+1,\gaus{\frac{\v}{2}}}$. For $k=0,\ldots,k_p-1$ and $\v\in V_k$, we say that $B_{k,\v}$ is good if
\begin{enumerate}[(i)]
\item $B_{k+1,\gaus{\frac{\v}{2}}}$ is good,
\item $B_{k,\v}$ is non-empty,
\item $\x_{k,\v}$ has at least $S$ neighbours in $N_{k,\v}$ and
\item $\x_{k,\v}$ and $\x_{k+1,\gaus{\frac{\v}{2}}}$ are connected via a connector in $C_{k,\v}$
\end{enumerate}
and otherwise we say that $B_{k,\v}$ is bad. Note that again \eqref{eq:box-non-empty}- \eqref{eq:box-twoconn} implies that there exists $c>0$ such that for all $k=0,\ldots,k_p-1$, $\v\in V_k$ and $\e\in \{0,1\}^d$, it holds
\begin{equation}
\P(B_{k,2\v+\e}\ \text{is good}\,\vert\, B_{k+1,\v}\ \text{is good}\} \geq 1-\exp(-ce^{kd\varepsilon_4})\label{eq:good_children}
\end{equation}
and given $B_{k+1,\v}$ is good, the events that $B_{k,2\v+\e}$ is good are independent of each other and any other box on this layer, since they depend on disjoint subsets of $\mathcal{X}$ and edges occur independently. Therefore, the number of good children of $B_{k+1,\v}$, given this box is good, is Binomial-distributed with parameters $2^d$ and $p_k>1-\exp(-ce^{kd\varepsilon_4})$ and, given the good boxes on layer $k+1$, the numbers of good children of each of those good boxes are independent of each other. Consequently, denote by $\abs{B_k}$ the number of good boxes in layer $k$. Then, given $\abs{B_{k+1}}$, $\abs{B_k}$ is Binomial-distributed with parameters $2^d\abs{B_{k+1}}$ and $p_k$.

With this observation we are able to give estimates on the number of good boxes in each layer and show that sufficiently many good boxes exists. For $k=0,\ldots,k_p-1$, we denote by $E_k := \set{\abs{B_k}>2^d(1-k^{-2})\abs{B_{k+1}}}$ the event that layer $k$ has sufficiently many good boxes in comparison to layer $k+1$ of the next more powerful vertices and we denote by $E_{k_p}$ the event that all boxes in layer $k_p$ are good. Then, the event $E_k\cap\ldots\cap E_{k_p}$ implies that
\[
\abs{B_k} > \abs{B_{k_p}} \prod_{i=k}^{k_p-1} 2^d(1-i^{-2}) > c n_0^d 2^{d(k_p-k-1)} > c2^{-kd}n,
\]
where $c=\prod_{i=1}^\infty (1-i^{-2})$. Thus, it is sufficient to show that there exists $k_0$ such that $E_{k_0}\cap \ldots \cap E_{k_p}$ holds with high probability. We choose $k_0$ large enough that \eqref{eq:box-star} still holds for all $k\geq k_0$ and $\v\in V_k$ and that $1-\exp(ce^{k_0d\varepsilon_4})>1-k_0^{-2}$. Then, by a Chernoff bound for Binomial-distributed random variables it holds
\[
P\big(E_k^c\,\big\vert\, \abs{B_{k+1}}\big) \leq \exp\big(-\frac{2^{d-1}\abs{B_{k+1}}ce^{kd\varepsilon_4}}{k^2}\big)
\]
for all $k\geq k_0$. As a consequence, there exists $c>0$ such that
\[
\P(E_k\,\vert\, E_{k+1}\cap \ldots\cap E_{k_p}) \geq 1-\exp(-c\frac{e^{kd\varepsilon_4} 2^{-kd}n^d}{k^2})\geq 1-\exp\big(-c\frac{n^{ad\varepsilon_4}}{(\log n)^2}\big).
\]
Hence, it follows together with \eqref{eq:goodpowerfullayer} that
\[
\P(E_{k_0}\cap\ldots \cap E_{k_p}) \geq \P(E_{k_p}) \bigg(1-\gaus{a\log n}\exp\big(-c\frac{n^{ad\varepsilon_4}}{(\log n)^2}\big)\bigg) \geq 1-\exp(-n^\varepsilon)
\]
for some $\varepsilon$ not depending on $n$. As $E_{k_0}\cap \ldots\cap E_{k_p}$ implies the existence of up to a constant at least $2^{-k_0}n$ good boxes and therefore the existence of a connected subgraph of $\mathscr{G}_n$ containing $bn$ distinct stars, for some $b>0$, this completes the proof.
\end{proof}

\begin{proof}[Proof of Theorem \ref{thm:exp_extinction_time}]
Given the subgraph $G_n = (V_n,E_n)$ provided by Proposition \ref{pro:clusterofstars} note that $G_n$ is a connected tree by construction. Denote by $M_n\subset V_n$ the set of vertices which are the midpoints of the stars containing $S$ vertices. For $\x,\y \in M_n$, write $\x \overset{2}{\leftrightarrow}\y$ if there exists a connector in $G_n$ which forms an edge to $\x$ and $\y$. By definition all vertices $\x,\y \in M_n$ with $\x \overset{2}{\leftrightarrow}\y$ have graph distance at most two in $G_n$. The graph $H_n$ given by the vertex set $M_n$ and the edge set
\[
F_n := \set{\{\x,\y\}: \x,\y\in M_n, \x \overset{2}{\leftrightarrow}\y}
\]
is a connected tree with degree bounded by $2^d+2$ and for each pair $\x,\y \in M_n$ with $\x \overset{2}{\leftrightarrow}\y$ the connector is unique. Hence, for any $\lambda>0$ and with $S>0$ chosen sufficiently large depending on $\lambda$ by the same arguments as used in the proof of \cite[Theorem 1.4]{MouMVY2016} together with \cite[Proposition 5.2]{MouMVY2016} it holds 
$\lim_{n\to \infty} \P\{\tau_{n} \geq e^{cn}\} = 1$ for some constant $c>0$.
\end{proof}

{\bf Acknowledgment:} This research was supported by Deutsche Forschungsgemeinschaft (DFG) as Project Number~425842117. The authors would like to thank Amitai Linker and Peter M\"orters for the helpful discussions resulting in this project. This work forms part of the second author's PhD thesis.

\printbibliography
\end{document}